\numberwithin{equation}{section}
\newtheorem{thm}{Theorem}[section]
\newtheorem{lem}[thm]{Lemma}
\newtheorem{prop}[thm]{Proposition}
\newtheorem{cor}[thm]{Corollary}
\newtheorem{conj}[thm]{Conjecture}
\newtheorem*{thm*}{Theorem}
\newtheorem*{cor*}{Corollary}
\theoremstyle{definition}
\newtheorem{thml}{Theorem}
\theoremstyle{remark}
\newtheorem{rem}[thm]{Remark}
\newtheorem*{rem*}{Remark}
\newcommand{\eps}{\varepsilon}
\newcommand{\wt}[1]{\widetilde{#1}}
\newcommand{\R}{\mathbb{R}}
\newcommand{\N}{\mathbb{N}}
\newcommand{\cT}{\mathcal{T}}
\DeclareMathOperator{\inj}{inj}
\DeclareMathOperator{\sys}{sys}
\DeclareMathOperator{\vol}{vol}
\DeclareMathOperator{\diam}{diam}
\DeclareMathOperator{\grad}{grad}
\begin{document}

\title[Flexibility in fixed conformal classes]{Flexibility of geometrical and dynamical data in fixed conformal classes}
\author{Thomas Barthelm\'e}
\address{Department of Mathematics and Statistics, Queen's University, Kingston, ON}
\email{thomas.barthelme@queensu.ca}

\author{Alena Erchenko}
\address{Department of Mathematics, Pennsylvania State University, State College, PA}
\email{axe930@psu.edu}

\begin{abstract}
Consider a smooth closed surface $M$ of fixed genus $\geqslant 2$ with a hyperbolic metric $\sigma$ of total area $A$. In this article, we study the behavior of geometric and dynamical characteristics (e.g., diameter, Laplace spectrum, Gaussian curvature and entropies) of nonpositively curved smooth metrics with total area $A$ conformally equivalent to $\sigma$. For such metrics, we show that the diameter is bounded above and the Laplace spectrum is bounded below away from zero by constants which depend on $\sigma$. On the other hand, we prove that the metric entropy of the geodesic flow with respect to the Liouville measure is flexible. Consequently, we also provide the first known example showing that the bottom of the $L^2$-spectrum of the Laplacian cannot be bounded from above by a function of the metric entropy.
We also provide examples showing that our conditions are essential for the established bounds.
\end{abstract}

\maketitle


\section{Introduction}


Let $M$ be a closed surface equipped with a Riemannian metric $\sigma$. An old and classical problem in geometry is to wonder how dynamical or geometrical invariants can change when one deforms the metric $\sigma$ in a certain class of metric. Among the invariants of interests are the Laplace spectrum, both on the surface, and the $L^2$-spectrum on the universal cover, the systole, the entropies (metric, harmonic, or topological) of the geodesic flow, or the Kaimanovich entropy and linear drift of the Brownian motion.
Since anything can be changed by scaling, a necessary condition to make the problem non-trivial is to fix, for instance, the volume.
There has been a lot of work done around that problem, in particular when the deformation is taken among all the negatively (or nonpositively) curved metrics. Another often studied class is the conformal class of a fixed metric. There seem, however, to have been relatively little work for metrics in the intersection of these two classes. Hence, in the present article, we aim to investigate how the geometric and dynamical invariants behave when we consider all negatively, or non positively, curved metrics in a fixed conformal class, with fixed area.


In \cite{K}, Katok proved that for any hyperbolic metric $\sigma$ on $M$ with total area $A$ and any negatively curved metric $g=e^{2u}\sigma$, with total area $A$, we have 
\begin{equation*} 
h_{\mu}(g)\leqslant h_{\mu}(\sigma)\int_M e^u \, \frac{d v_{\sigma}}{A}\quad \text{ and } \quad h_{top}(\sigma)\left(\int_M e^u \, \frac{d v_{\sigma}}{A}\right)^{-1}\leqslant h_{top}(g), 
\end{equation*}
where $h_{\mu}(g)$ and $h_{top}(g)$ are, respectively, the metric entropy with respect to the Liouville measure, and the topological entropy of the geodesic flow.
In particular, Katok obtains that, for any negatively curved metric $g$ with total volume $A$, we have
\begin{equation}\label{relation_entropies}
h_{\mu}(g)\leqslant \left(\frac{2\pi|\chi(M)|}{A}\right)^{\frac{1}{2}}\leqslant h_{top}(g).
\end{equation}
Furthermore, he shows that either equality above holds if and only if $g$ is a hyberbolic metric.
Moreover, in \cite{EK} the second author and Katok proved that equation (\ref{relation_entropies}) gives the only restriction on the possible pair of entropies in the class of negatively curved metrics with total area $A$. They do not, however, control the conformal classes of the metrics that they build.

Another famous set of inequalities (due to Guivarch \cite{Gu80} and Ledrappier \cite{Led90,Led10}), that are satisfied for any metrics on a compact manifold, is the following
\begin{equation*} 
 4\wt\lambda_1(g) \leq h(g)\leq l(g) h_{top}(g)\leq h_{top}(g)^2,
\end{equation*}
where $\wt\lambda_1(g)$ is the bottom of the $L^2$-spectrum of the Laplacian on the universal cover $\wt M$, and $h(g)$ and $l(g)$ are respectively the Kaimanovich entropy and the linear drift of the associated Brownian motion (see, for instance \cite{Led10} for the definitions). Moreover, in dimension $2$, when $g$ is assumed to be negatively curved, we have a strong rigidity result: any equality in the chain of inequalities above imply that $g$ is hyperbolic \cite{Led90}. When considering a fixed conformal class, Ledrappier \cite{L} proved that, for any hyperbolic metric $\sigma$ on $M$ with total area $A$ and any negatively curved metric $g=e^{2u}\sigma$, we have 

\begin{equation}\label{relation_drifts_conform}
l(g)\leqslant l(\sigma)\int_M e^u \, \frac{d v_{\sigma}}{A}, \qquad h(g)=h(\sigma) \quad \text{ and } \quad h_{harm}(g)\geqslant h_{harm}(\sigma)\left(\int_M e^u \, \frac{d v_{\sigma}}{A}\right)^{-1},
\end{equation}
where $h_{harm}(g)$ is the entropy of the harmonic measure, and is equal to $h(g)/l(g)$ \cite{L}. Moreover, $h_{top}(g) \geqslant h_{harm}(g)$ (by the variational principle) and equality holds if and only if $g$ is a hyperbolic metric \cite{L}.
Notice that, in spite of all these related results, the relationship, if any, between $\wt\lambda_1$ and the entropy of the Liouville measure, $h_{\mu}$, has remained very mysterious so far. In what appears to be one of the first steps in this direction, we will show (Theorem \ref{thmintro_example_small_metric_entropy}) that $h_{\mu}$ cannot be an upper bound for $\wt\lambda_1$.

Our aim in this study is to try to uncover the further restrictions, if any, for all of the above invariants that may appear when a conformal class is fixed, in the spirit of A.~Katok flexibility program (see \cite{BKRH,Er17,EK}). Notice that bounds on all of these invariants can be obtained if one assumes a lower bound on the curvature. However, the curvature is unbounded below in a conformal class, hence none of these results are available to us.

For any positive constant $A$, we let $[g]_A^{\leqslant}$ (resp. $[g]_A^{<}$) be the family of nonpositively (resp. negatively) curved metrics conformally equivalent to $g$ and with total area $A$. In the remainder of this text, a hyperbolic metric will always refer to a metric of constant negative curvature equal to $-1$.
Our first result, which can be seen as a slight generalization of Schwarz Lemma (see, for instance, \cite{Y}), is the following

\begin{thml}[Theorem \ref{bound_average_below_max_above}] \label{thmintro_upper_bound_on_u}
Let $(M,\sigma)$ be a hyperbolic surface of genus $\geqslant 2$ and total area $A$. Then, there exists a positive constant $C = C(\sigma)$ such that for any smooth function $u\colon M \rightarrow \R$ such that $g= e^{2u}\sigma\in [\sigma]_A^{\leqslant}$ we have $u(x)\leqslant C$ for any $x\in M$.
\end{thml}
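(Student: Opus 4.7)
My plan is to convert the curvature inequality into a pointwise estimate on $u$ via a Green's function representation.

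First I would translate the hypothesis $K_g\leq 0$ into a differential inequality. The conformal change formula in dimension two gives $K_g = e^{-2u}(K_\sigma - \Delta_\sigma u)$, where $\Delta_\sigma = \mathrm{div}_\sigma\,\mathrm{grad}_\sigma$ denotes the Laplace--Beltrami operator with the convention $\int_M \Delta_\sigma f\, dv_\sigma = 0$. Since $K_\sigma\equiv -1$, the inequality $K_g\leq 0$ becomes the pointwise bound $\Delta_\sigma u \geq -1$ on $M$. Note that nonpositive curvature only gives a lower bound on $\Delta_\sigma u$; there is no a priori upper bound, so a naive maximum-principle argument is doomed (at a maximum of $u$ one only learns $\Delta_\sigma u\in[-1,0]$, which tells us nothing about $u$ itself).

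Next I would use the area constraint $\int_M e^{2u}\,dv_\sigma = A$ to control the average $\bar u := \tfrac{1}{A}\int_M u\,dv_\sigma$. Jensen's inequality applied to the probability measure $dv_\sigma/A$ and the convex exponential yields $1 = \tfrac{1}{A}\int_M e^{2u}\,dv_\sigma \geq e^{2\bar u}$, hence $\bar u \leq 0$.

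The core of the argument is then a Green's function representation. Let $G(x,y)$ be the mean-zero Green's function of $(M,\sigma)$, so that $\Delta_{\sigma,y} G(x,\cdot) = \delta_x - 1/A$ and $\int_M G(x,\cdot)\,dv_\sigma = 0$. Since $G$ has only a logarithmic singularity along the diagonal, going to $-\infty$, and is smooth elsewhere on the compact $M\times M$, the constant $C_0 := \sup_{M\times M} G$ is finite and depends only on $\sigma$. Integration by parts on the closed manifold yields $u(x)-\bar u = \int_M G(x,y)\,\Delta_\sigma u(y)\,dv_\sigma(y)$, and since $\int_M G(x,\cdot) = 0$ we may replace $\Delta_\sigma u$ by the nonnegative function $\varphi := \Delta_\sigma u + 1$ inside the integral. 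As $\int_M\varphi\,dv_\sigma = A$, we conclude $u(x)-\bar u \leq C_0 \cdot A$, and combining with $\bar u\leq 0$ produces the sought bound $u(x)\leq C_0 A =: C(\sigma)$.

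The main obstacle is really the conceptual one flagged in the first step: because the curvature hypothesis is one-sided, the argument must be nonlocal. The Green's function is exactly the right nonlocal device, converting an $L^1$-bound on $\Delta_\sigma u + 1$ into a pointwise upper bound on $u$; this works precisely because $G$ is bounded above (its only singularity is a negative logarithm). The statement is morally a version of the classical Schwarz lemma, and one could equivalently phrase the proof through a Harnack inequality for the subsolution $-u$, but the Green's function route is elementary and exhibits the dependence of $C(\sigma)$ on $\sigma$ explicitly through $\sup G$ and $A=2\pi|\chi(M)|$.
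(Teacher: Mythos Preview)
Your argument is correct and takes a genuinely different route from the paper. The paper works locally: it lifts to the universal cover, centers hyperbolic polar coordinates at a maximum point $x_{\max}$ of $\wt u$, and uses Green's theorem together with $\Delta_\sigma u\geq -1$ to control the averages of $u$ on concentric circles (Proposition~\ref{prop_control_of_u_on_circles_at_max}); Jensen then turns this into a lower bound on the $g$-area of the ball $B_\sigma(x_{\max},\inj(\sigma))$, and the area constraint $A$ forces the explicit bound $\max_M u\leq \tfrac12\log\bigl(A/(4\pi\tanh^2(\inj(\sigma)/2))\bigr)$. Your proof is global and more conceptual: the mean-zero Green's function on the compact surface converts the $L^1$ control $\int_M(\Delta_\sigma u+1)\,dv_\sigma=A$ directly into a pointwise bound, exploiting only that the logarithmic singularity of $G$ points to $-\infty$ so that $\sup G<\infty$. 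What the paper's method buys is an explicit constant in terms of $\inj(\sigma)$, and, more importantly for their purposes, the circle-averaging technique is reused throughout Section~\ref{sec_restrictions} to bound integrals of $u$ along specific curves (Propositions~\ref{prop_control_of_u_on_circles} and~\ref{prop_bounded_length_of_hyperbolic_geodesic}). Your approach is shorter and makes no essential use of the constant curvature of $\sigma$ beyond $K_\sigma\equiv -1$ entering the inequality; it would adapt to any fixed background metric, at the cost of a less explicit constant $C_0=\sup_{M\times M}G$.
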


This result admits many corollaries. First, we obtain an upper bound on the diameter of $M$ for any $g\in [\sigma]_A^{\leqslant}$ depending only on $\sigma$ (independently of that result, we also obtain an upper bound on the diameter under the weaker assumption that $g$ has no conjugate point, see Theorem \ref{thm_diam_no_conjugate}). Second, thanks to the Min-Max principle (see Subsection \ref{subsec_upper_bound_conformal}), we also get a lower bound on the Laplace spectrum:
\begin{cor*}[Corollary \ref{bound_Laplace_spectrum_conformal}]
Let $(M,\sigma)$ be a hyperbolic surface of genus $\geq2$ and total area $A$.

There exists a constant $C>0$ such that the following holds:
For any $g= e^{2u}\sigma \in [\sigma]_A^{\leqslant}$ and any $k$, we have
\[
\lambda_k(g)\geqslant C\lambda_k(\sigma). 
\]

In particular, the function $g \mapsto \lambda_1(g)$ is uniformly bounded away from $0$ on the space $[\sigma]_A^{\leqslant}$.

Similarly, on the universal cover, we have 
\[
 \wt \lambda_1(\wt g) \geqslant C \wt \lambda_1(\wt \sigma).
\]
\end{cor*}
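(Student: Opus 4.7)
The plan is to combine Theorem \ref{thmintro_upper_bound_on_u} with the Min-Max characterization of the Laplace spectrum, exploiting the fact that in dimension two the Dirichlet energy is conformally invariant. Let $g = e^{2u}\sigma$ with $u\leqslant C$ by Theorem \ref{thmintro_upper_bound_on_u}. A direct computation in conformal coordinates gives $dv_g = e^{2u}dv_\sigma$ and $|\nabla_g f|^2 = e^{-2u}|\nabla_\sigma f|^2$, so that $|\nabla_g f|^2\, dv_g = |\nabla_\sigma f|^2\, dv_\sigma$. Consequently, the Rayleigh quotient takes the form
\[
R_g(f) = \frac{\int_M |\nabla_\sigma f|^2\, dv_\sigma}{\int_M f^2 e^{2u}\, dv_\sigma}.
\]

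From the pointwise bound $e^{2u}\leqslant e^{2C}$ given by Theorem \ref{thmintro_upper_bound_on_u}, the denominator satisfies $\int_M f^2 e^{2u}\, dv_\sigma \leqslant e^{2C}\int_M f^2\, dv_\sigma$, hence $R_g(f)\geqslant e^{-2C}R_\sigma(f)$ for every smooth $f$. Applying the Min-Max principle
\[
\lambda_k(g) = \inf_{V}\sup_{f\in V\setminus\{0\}} R_g(f),
\]
where the infimum runs over $(k+1)$-dimensional subspaces of $C^\infty(M)$, yields $\lambda_k(g)\geqslant e^{-2C}\lambda_k(\sigma)$ for all $k$. Setting the constant in the statement to $e^{-2C}$ concludes the compact case, and the claim that $\lambda_1(g)$ is bounded away from zero then follows because $\lambda_1(\sigma)>0$ depends only on $\sigma$.

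For the universal cover, I would lift $u$ to a $\pi_1(M)$-invariant function $\tilde u\colon\wt M\to \R$, which still satisfies $\tilde u\leqslant C$ everywhere. The pulled-back metric $\tilde g = e^{2\tilde u}\tilde\sigma$ is conformal to the hyperbolic plane metric $\tilde\sigma$, so the same two-dimensional conformal identity $|\nabla_{\tilde g} f|^2\, dv_{\tilde g} = |\nabla_{\tilde\sigma} f|^2\, dv_{\tilde\sigma}$ holds. Using the variational characterization
\[
\wt\lambda_1(\tilde g) = \inf_{f\in C_c^\infty(\wt M)\setminus\{0\}}\frac{\int_{\wt M}|\nabla_{\tilde g} f|^2\, dv_{\tilde g}}{\int_{\wt M}f^2\, dv_{\tilde g}},
\]
the identical bound of the denominator by $e^{2C}\int_{\wt M} f^2\, dv_{\tilde\sigma}$ yields $\wt\lambda_1(\tilde g)\geqslant e^{-2C}\wt\lambda_1(\tilde\sigma)$, completing the argument.

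There is no genuine obstacle here: once Theorem \ref{thmintro_upper_bound_on_u} is available, the proof is a one-line consequence of the conformal invariance of the Dirichlet energy in dimension two, which is precisely what makes the surface setting so clean. The only mildly delicate point is to observe that the same $C$ works uniformly across the whole family $[\sigma]_A^{\leqslant}$ (this is the content of Theorem \ref{thmintro_upper_bound_on_u}), and that lifting preserves the pointwise bound on $u$, so the universal-cover statement does not require any separate argument.
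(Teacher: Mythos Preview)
Your proof is correct and follows exactly the approach the paper takes: it writes the Rayleigh quotient for $g=e^{2u}\sigma$ using the conformal invariance of the Dirichlet energy in dimension two, bounds the denominator via $e^{2u}\leqslant e^{2C}$ from Theorem~\ref{bound_average_below_max_above}, and concludes by the Min-Max principle. You have merely written out in full the details that the paper leaves implicit (including the explicit constant $e^{-2C}$ and the universal-cover variational formula with $C_c^\infty$ test functions).
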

Let us stress again that, as opposed to, for instance, Li and Yau's lower bound for the Laplace spectrum (see \cite[Theorem 4]{SY}), our result does not require a lower bound on the Gaussian curvature (see Section \ref{sec_flexibility} for an example of metric in a fixed conformal class with arbitrarily negative Gaussian curvature).

We also study in more details a certain family $g_{\eps}$ of metrics build in \cite{EK}. This family is such that the metric entropy of $g_{\eps}$ comes arbitrary close to zero (for $\eps$ going to $0$) and the topological entropy is bounded above by a fixed constant. By \cite{L}, the linear drift of these examples is also bounded away from zero by a fixed constant.

We prove that the family of hyperbolic metrics $\sigma_{\eps}$ in the conformal class of $g_{\eps}$ must stay in a compact part of the Teichm\"uller space (see Theorem \ref{example_metric_entropy} and its proof). As a result of this study and our Corollary above, we obtain the first known example of a negatively curved metric $g$ such that $h_{\mu}(g)^2 < 4\wt\lambda_1(g)$:
\begin{thml}[Corollary \ref{cor_metric_entropy_vs_lambda_1}] \label{thmintro_example_small_metric_entropy}
 Let $\eps>0$ and $f\colon \R \rightarrow \R^{+}$ be any real continuous function such that $f(0)=0$, then there exists a negatively curved metric $g$ such that 
 \[
  f\left(h_{\mu}(g)\right) < \wt\lambda_1(g),  \text{ and } h_{\mu}(g)< \eps.
 \]
Furthermore, we can choose $g\in [\sigma]^{<}_A$, where $\sigma$ is a hyperbolic metric inside a fixed compact set of the Teichm\"uller space.
\end{thml}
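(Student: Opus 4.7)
The plan is to take the family $\{g_\delta\}$ of negatively curved metrics with $h_\mu(g_\delta) \to 0$ constructed in \cite{EK} and show that along this family $\wt\lambda_1(g_\delta)$ stays bounded below by a positive constant independent of $\delta$; the conclusion will then follow by continuity of $f$ at $0$.

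Write $g_\delta = e^{2u_\delta}\sigma_\delta$, where $\sigma_\delta$ is the unique area-$A$ hyperbolic representative of the conformal class of $g_\delta$. The paragraph preceding the statement asserts that $\{\sigma_\delta\}$ remains inside a fixed compact subset $K$ of Teichm\"uller space (this will be proved in Theorem \ref{example_metric_entropy}); I would take this as input. By Corollary \ref{bound_Laplace_spectrum_conformal},
\[
\wt\lambda_1(\wt g_\delta) \geqslant C(\sigma_\delta)\, \wt\lambda_1(\wt \sigma_\delta) = \tfrac{1}{4}C(\sigma_\delta),
\]
since the universal cover of any hyperbolic surface is $\mathbb{H}^2$, on which the bottom of the $L^2$-spectrum of the Laplacian equals $1/4$. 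The constant $C(\sigma)$ in Theorem \ref{bound_average_below_max_above} should depend continuously on $\sigma$ (inspection of its proof, which produces an explicit bound in terms of $\sigma$, ought to give this), so $C_0 := \inf_{\sigma \in K} C(\sigma) > 0$, and hence $\wt\lambda_1(g_\delta) \geqslant C_0/4$ for every $\delta$.

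Given $f$ as in the statement and $\eps > 0$, continuity of $f$ at $0$ with $f(0) = 0$ produces some $\eta > 0$ such that $f(t) < C_0/4$ whenever $|t| < \eta$. Since $h_\mu(g_\delta) \to 0$, we may choose $\delta$ small enough that $h_\mu(g_\delta) < \min(\eps,\eta)$; then $g := g_\delta$ satisfies $h_\mu(g) < \eps$ and $f(h_\mu(g)) < C_0/4 \leqslant \wt\lambda_1(g)$. The inclusion $\sigma_\delta \in K$ gives the last assertion.

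The main obstacle is the compactness of $\{\sigma_\delta\}$ in Teichm\"uller space, i.e.\ Theorem \ref{example_metric_entropy}. The standard boundary degeneration is the collapse of a simple closed curve $\gamma$, so one needs to show that if the $\sigma_\delta$-length of $\gamma$ tends to zero along a subsequence, the metric entropy of $g_\delta$ cannot simultaneously tend to zero. This will require a somewhat delicate analysis of the \cite{EK} construction, presumably balancing the $g_\delta$-length of $\gamma$ and the conformal factor $u_\delta$ against the area constraint and the established control on $h_{top}(g_\delta)$ and $l(g_\delta)$. A secondary but easier issue is verifying continuity (or at least lower semicontinuity on compacta) of the constant $C(\sigma)$ from Theorem \ref{bound_average_below_max_above}, which should be a routine byproduct of its proof.
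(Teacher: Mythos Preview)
Your derivation of the corollary from Theorem~\ref{example_metric_entropy} and Corollary~\ref{bound_Laplace_spectrum_conformal} is correct and is exactly what the paper does (its proof is the single sentence ``This follows directly from the combination of Corollary~\ref{bound_Laplace_spectrum_conformal} and Theorem~\ref{example_metric_entropy}''). Your side remark about the continuity of $C(\sigma)$ is also on target: the proof of Theorem~\ref{bound_average_below_max_above} gives the explicit bound $\max_M u \leqslant \tfrac12\log\bigl(A/(4\pi\tanh^2(\inj(\sigma)/2))\bigr)$, so $C(\sigma)$ depends only on $\inj(\sigma)$ and is therefore uniformly positive on any compact in Teichm\"uller space.

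Where your outline diverges from the paper is in your speculation about Theorem~\ref{example_metric_entropy}. You propose to rule out systole collapse of $\sigma_\delta$ by playing the conformal factor, the area constraint, and the known bounds on $h_{top}$ and $l$ against one another. The paper does something much more direct and does not use any of those dynamical quantities: it shows that the \emph{diameter} of $\sigma_\delta$ is uniformly bounded. The key point is that the $g_\delta$ (the \cite{EK} metrics) are rotationally invariant in a fixed finite collection of balls covering $M$, with uniform control on $\sqrt{\det g_\delta}$; Lemma~\ref{lem_diameter_no_conjugate_point} then bounds the diameter of \emph{any} conjugate-point-free metric conformal to $g_\delta$ of area $A$, in particular $\sigma_\delta$. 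Bounded diameter for a hyperbolic metric of fixed area forces it into a compact of Teichm\"uller space. So the argument is geometric (a no-conjugate-points trapping lemma) rather than the entropy/drift balancing you envision.
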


In terms of the flexibility program, Theorem \ref{example_metric_entropy} shows that one can stay in the conformal classes of hyperbolic metrics in a fixed compact of the Teichm\"uller space (or any small neighborhood of at least one particular hyperbolic metric), and still obtain all the possible values for the metric entropy. We believe however that the stronger result below should be true.
\begin{conj}\label{conj_metric_entropy_flexible}
Let $(M,\sigma)$ be a hyperbolic surface of genus $\geqslant 2$ and total area $A$. Then, 
\[
\inf_{g\in[\sigma]_A^{<}}(h_{\mu}(g))=0.
\]
\end{conj}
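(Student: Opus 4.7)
Given a hyperbolic surface $(M,\sigma)$ of total area $A$, the plan is to construct a sequence of negatively curved metrics $g_n = e^{2u_n}\sigma \in [\sigma]_A^<$ with $h_{\mu}(g_n) \to 0$. The geometric idea is to concentrate the total curvature (which, by Gauss--Bonnet, equals $-2\pi|\chi(M)| = -A$) on a small region $R_n \subset M$, while keeping $-K_{g_n}$ close to $0$ on the complement $L_n = M \setminus R_n$ that carries most of the $g_n$-volume. The $g_n$-Liouville measure then concentrates on a nearly-flat part, so one expects the Lyapunov exponents --- and hence $h_{\mu}$ --- to be small.

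Concretely, writing $-K_g = e^{-2u}(1 + \Delta_\sigma u)$, I would pick $\eps_n \searrow 0$ and small disks $R_n$ of $\sigma$-area $\eta_n \to 0$, and set $\phi_n = \eps_n + \psi_n$, where $\psi_n \geq 0$ is smooth, supported on $R_n$, and satisfies $\int_M \phi_n \, dv_\sigma = A$. Let $u_n$ be the (unique up to additive constant) solution of the Poisson equation $\Delta_\sigma u_n = \phi_n - 1$, with the constant fixed by $\int_M e^{2u_n}\, dv_\sigma = A$. Then $g_n \in [\sigma]_A^<$, since $1 + \Delta_\sigma u_n = \phi_n \geq \eps_n > 0$. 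Standard Green's function estimates for $(M,\sigma)$ show that $u_n$ is bounded above (consistent with the cap $u_n \leq C(\sigma)$ from Theorem A) but is very negative (of order $\log \eta_n$) in a neighborhood of $R_n$. Consequently, the $g_n$-volume of $R_n$ tends to $0$, while on $L_n$ we have $-K_{g_n} \lesssim e^{-2u_n}\eps_n \to 0$.

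To conclude, one would invoke the Pesin formula $h_{\mu}(g_n) = \int_{SM} U\, d\mu_{g_n}$, with $U$ the positive Riccati solution of $U' + U^2 + K_{g_n} = 0$ along orbits. The naive bound
\[
h_{\mu}(g_n)^2 \leq \int_{SM} (-K_{g_n})\, d\mu_{g_n} = \frac{2\pi|\chi(M)|}{A} = 1,
\]
obtained from Cauchy--Schwarz and time-averaging the Riccati identity, only recovers Katok's inequality. To go beyond, one must exploit that $-K_{g_n}$ is concentrated on a set of vanishing $\mu_{g_n}$-measure: a refined Riccati analysis should show that, on a typical orbit, $U$ spikes only briefly during excursions through $R_n$ and relaxes toward $\sqrt{-K_{g_n}} \lesssim \sqrt{\eps_n}$ during the long sojourns in $L_n$, giving $h_{\mu}(g_n) = O(\sqrt{\eps_n}) + o(1) \to 0$.

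The main obstacle lies in this last step. The Riccati relaxation time toward $\sqrt{-K_{g_n}}$ on $L_n$ is of order $1/\sqrt{\eps_n}$, which becomes long as $\eps_n$ shrinks. One must therefore choose $\eta_n$ small enough (as a function of $\eps_n$) so that the typical return time to $R_n$ exceeds this relaxation time. Making this quantitative will require mixing or equidistribution estimates for the geodesic flow of $g_n$, uniform in $n$, as well as control on the geometry of the transitions between $R_n$ and $L_n$. The most delicate point is verifying that $U$ does not remain large due to rapid repeated traversals of $R_n$; this, in my view, is where the real work of the proof would lie.
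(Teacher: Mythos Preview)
This statement is presented in the paper as an open \emph{conjecture}; the paper does not prove it. What the paper does establish is the weaker Theorem~\ref{example_metric_entropy}: there is a compact set $\mathcal{K}$ in Teichm\"uller space such that for every $\eps>0$ one can find $g\in[\sigma']_A^{<}$ with $h_\mu(g)<\eps$ for \emph{some} $\sigma'\in\mathcal{K}$. The conformal class is not fixed; only its hyperbolic representative is confined to a compact set. In the remark following Corollary~\ref{arbitr_small_in_nbd} the authors explain precisely why they cannot push this to a fixed $\sigma$: the conformal factors $u_n$ in their construction are $C^0$-close but not $C^1$-close as $n\to\infty$, so continuity results for metric entropy do not apply, and transplanting the $u_n$ onto a fixed $\sigma$ need not preserve negative curvature.

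Your proposal is a plausible strategy for attacking the conjecture, and you are candid that it is incomplete. The gap you identify is real and, as far as I can see, is exactly the obstruction. The Cauchy--Schwarz step only yields Katok's inequality, and the refined Riccati argument you outline would need uniform-in-$n$ control on return times to $R_n$ versus the relaxation time $1/\sqrt{\eps_n}$ on $L_n$. Obtaining such mixing or equidistribution estimates for the geodesic flow of a degenerating family $g_n$---with curvature blowing up on $R_n$ and no uniform lower curvature bound---is not standard, and the paper offers no tools in this direction. In short: there is nothing to compare your proposal to, because the paper leaves the statement open; your sketch is a reasonable program, but the step you flag as ``where the real work would lie'' remains genuinely unresolved.
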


Another goal of our study was to determine whether the topological entropy is as flexible as the metric entropy in a conformal class. It turns out that the flexibility of the topological entropy is linked to the flexibility of two other invariants: the linear drift $l(g)$ and the systole $\sys(g)$, i.e., the length of the shortest geodesic.

On one side, the topological entropy is, in negative curvature, always bounded above by a constant depending only on the area and the (inverse of the) systole (see Theorem \ref{bound_entr_systole}). So in particular, if $h_{top}(g)$ is unbounded, then $\sys(g)$ must go to zero.


Conversely, Besson, Courtois and Gallot \cite[Corollaire 0.6]{BCG_margulis}, proved that if a family of negatively curved metrics with bounded diameter (this is always satisfied in our case due to Theorem \ref{thm_diam_no_conjugate}) is such that if $\sys(g)$ goes to zero, then $h_{top}(g)$ must be unbounded (this result also follows from the arguments of \cite[Section 2.3]{EK}). Finally, the variational principle and equation \eqref{relation_drifts_conform} implies that, if $l(g)$ goes to zero in a conformal class, then $h_{top}(g)$ must be unbounded (and hence, $\sys(g)$ must go to zero).

In this article, we obtained some partial results regarding the systole. We did not manage to show that the systole stays bounded away from zero in a class $[\sigma]_A^{\leqslant}$. However, we did prove that the ``obvious'' way of building a family of metric with systole going to zero cannot be done in a conformal class without positive curvature. More precisely, writing $l_g(\gamma)$ for the $g$-length of a curve $\gamma$, we show
\begin{thml}\label{thmintro_geodesics_do_not_shrink}
Let $(M,\sigma)$ be a hyperbolic surface of genus $\geqslant 2$ and total area $A$. For any $N>0$, and any closed $\sigma$-geodesic $\gamma$, such that $l_{\sigma}(\gamma) \leqslant M$, there exists a positive constant $\eps=\eps(N)$ such that, for any $g\in [\sigma]_A^{\leqslant}$, we have 
\[
 l_g(\gamma) \geqslant  \eps.
\]
\end{thml}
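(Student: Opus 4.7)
By Jensen's inequality applied to the convex function $x\mapsto e^x$,
\[
 l_g(\gamma) \;=\; \int_\gamma e^u\, ds_\sigma \;\geqslant\; l_\sigma(\gamma)\exp\!\left(\frac{1}{l_\sigma(\gamma)}\int_\gamma u\, ds_\sigma\right).
\]
Since $(M,\sigma)$ is fixed we have $l_\sigma(\gamma)\geqslant \sys(\sigma)>0$, so it is enough to bound $\int_\gamma u\, ds_\sigma$ from below by a constant depending only on $N$, $\sigma$ and $A$. The inputs I will use are: Theorem~\ref{thmintro_upper_bound_on_u}, which gives $u\leqslant C=C(\sigma)$; the area normalization $\int_M e^{2u}\, dv_\sigma = A$; and the nonpositive curvature condition $\Delta_\sigma u \geqslant -1$. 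Testing the last against $e^{2u}\geqslant 0$ and integrating by parts already yields the a priori $H^1$-bound $\int_M |\nabla_\sigma(e^u)|^2 dv_\sigma \leqslant A/2$, so that the family $\{e^u:g\in[\sigma]^\leqslant_A\}$ is bounded in $H^1(M,\sigma)$.

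Next I reduce to the case where $\gamma$ is simple, the general case following by lifting to the universal cover $\mathbb{H}^2$, where every lift of $\gamma$ is a geodesic line. For simple $\gamma$, the collar lemma for $(M,\sigma)$ gives a $\sigma$-embedded tubular neighbourhood $T_\rho$ of $\gamma$ of width $\rho=\rho(N,\sigma)>0$; introduce Fermi coordinates $(s,t)\in[0,L)\times(-\rho,\rho)$, where $\sigma=\cosh^2(t)\,ds^2+dt^2$ and $L=l_\sigma(\gamma)$. Set
\[
 \Phi(t) \;:=\; \int_0^L e^{u(s,t)}\,ds, \qquad t\in[-\rho,\rho],
\]
so that $\Phi(0)=l_g(\gamma)$. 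A direct computation using the Fermi form of the Laplacian $\Delta_\sigma u=u_{tt}+\tanh(t)u_t+\operatorname{sech}^2(t)u_{ss}$, the pointwise inequality $\Delta_\sigma u\geqslant -1$, and integration by parts in the periodic variable $s$ (which produces the non-negative term $\operatorname{sech}^2(t)\int e^u u_s^2\,ds$) yields the Sturm-Liouville type differential inequality
\[
 \Phi''(t)+\tanh(t)\,\Phi'(t)+\Phi(t) \;\geqslant\; 0 \qquad \text{on }[-\rho,\rho].
\]
At the same time, the area constraint together with Cauchy-Schwarz in the $s$-variable gives the weighted $L^2$-bound $\int_{-\rho}^\rho \cosh(t)\Phi(t)^2\,dt \leqslant AL$, while Theorem~\ref{thmintro_upper_bound_on_u} yields the pointwise upper bound $\Phi(t)\leqslant Le^C$.

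The plan is then to extract from these three controls a quantitative lower bound $\Phi(0)\geqslant\varepsilon(N,\sigma,A)>0$, by a Sturm-type contradiction argument: were $\Phi(0)$ arbitrarily small, the differential inequality above would force $\Phi$ to remain small on an interval around $0$ of definite size, obtained by comparison with positive solutions of the associated ODE $\phi''+\tanh(t)\phi'+\phi=0$, while the weighted $L^2$-bound together with the pointwise sup bound would then be incompatible with this behaviour. The geometric content is that the nonpositive curvature condition $\Delta_\sigma u\geqslant -1$ prevents $u$ from dipping arbitrarily deeply inside a tube of fixed $\sigma$-width $\rho$: the typical depth is at most of order $\rho^2$, whence $\int_\gamma u\,ds_\sigma$ is bounded below by a constant times $-\rho^2$.

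The main obstacle I anticipate is this last step: turning the qualitative Sturm-type comparison into an explicit positive lower bound on $\Phi(0)$ depending only on $N$, $\sigma$ and $A$. This amounts to an explicit ODE analysis of the operator $\phi\mapsto\phi''+\tanh(t)\phi'+\phi$ on $[-\rho,\rho]$, which should be feasible via standard Sturm-Picone techniques but requires some bookkeeping. A secondary point is the extension to non-simple $\gamma$, which is handled by carrying out the above computation equivariantly in a fundamental region of $\mathbb{H}^2$ for the hyperbolic isometry corresponding to $\gamma$.
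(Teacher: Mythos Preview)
Your reduction via Jensen, the passage to Fermi coordinates around $\gamma$, and the derivation of a one-variable differential inequality from $\Delta_\sigma u\geqslant -1$ are all in the same spirit as the paper's proof. However, the final ``Sturm-type contradiction'' has a genuine gap, and the ingredients you list are not sufficient to close it.

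The inequality $\Phi''+\tanh(t)\Phi'+\Phi\geqslant 0$ is a \emph{lower} bound on the second-order expression; via the substitution $w=\Phi/\phi$ with $\phi>0$ solving the associated ODE, it yields a maximum principle for $w$ (no strict interior maximum), hence an \emph{upper} bound on $\Phi(0)$ in terms of boundary values---exactly the wrong direction. It does not prevent $\Phi$ from having an arbitrarily small interior minimum: e.g.\ $\Phi(t)=\delta+at^2$ satisfies the inequality for any $a>0$. So ``the differential inequality would force $\Phi$ to remain small near $0$'' is false, and in any case $\Phi$ being small on $[-\rho,\rho]$ is perfectly compatible with the \emph{upper} bound $\int\cosh(t)\Phi^2\leqslant AL$ and with $\Phi\leqslant Le^C$. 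There is no contradiction. The $H^1$ bound on $e^u$ you derive is correct but likewise only controls $e^u$ from above in various norms; it gives no pointwise or trace lower bound.

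What is missing is a \emph{lower} bound on the transverse average at \emph{some} slice $t=t_0$ of the tube, together with control on its derivative there, which can then be propagated to $t=0$ by the differential inequality. This is precisely what the paper supplies: working with $\psi(t)=\int_0^L u(s,t)\,ds$ (not $\int e^u$) in the cylindrical cover $M_c$ associated to $[\gamma]$, it first proves (Corollary~\ref{bound_below_on_open_set}, ultimately resting on the analysis of $u$ near its maximum in Proposition~\ref{prop_control_of_u_on_circles_at_max}) that $\iint_O \widetilde u\,dv_\sigma$ is bounded below on any bounded region $O$ of the universal cover. Applied to a half-tube of width $\diam(M,\sigma)$ this gives some $R_1$ with $\psi(-R_1)$ bounded below; combining with $u\leqslant C$ then yields a nearby $R_2$ where both $\psi(-R_2)$ and $\psi'(-R_2)$ are controlled. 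Only then does the Green-theorem/differential-inequality argument propagate the bound to $\psi(0)$. Your proposal has the propagation step but not the anchor; you should replace the area and $H^1$ upper bounds by this lower bound on $\iint u$ over the tube. Working with $\psi=\int u$ rather than $\Phi=\int e^u$, and in the cyclic cover rather than in an embedded collar, also makes both the analysis and the reduction from non-simple $\gamma$ cleaner.
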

This result is presumably far from optimal, since our lower bound goes to zero as the length of the original $\sigma$-geodesic goes to infinity, and it seems at the very least counter-intuitive that one could shrink a very long curve without shrinking short ones. Although we did not pursue it, one could adapt our arguments to show that for any topologically non trivial closed curve $\gamma$ of controlled $\sigma$-length (i.e., bounded above), and controlled $\sigma$-geodesic curvature (both above and below), one can obtain a positive lower bound for the $g$-length of $\gamma$, with $g$ in $[\sigma]_A^{\leqslant}$. But again, the bound obtain via our technique will go to zero as the $\sigma$-length or $\sigma$-geodesic curvature goes to infinity. Nonetheless, we feel confident that the following should be true.
\begin{conj} \label{conj_systole_bounded_below}
Let $(M,\sigma)$ be a hyperbolic surface of genus $\geqslant 2$ and total area $A$. Then, there exists a constant $C = C(\sigma)>1$ such that, for any $g\in[\sigma]_A^{\leqslant}$, $\sys(g)>C^{-1}$, $h_{top}(g) <C$, and $C^{-1}< l(g)< C$.
\end{conj}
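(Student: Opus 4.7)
The plan is to reduce the three assertions of the conjecture to a single statement---a uniform lower bound on the systole---and then attack that by contradiction using the structural constraints on the conformal factor.

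\textbf{Reduction.} The upper bound $l(g)<C$ is almost immediate: combining Ledrappier's inequality \eqref{relation_drifts_conform} with the pointwise bound $u\leqslant C(\sigma)$ from Theorem \ref{thmintro_upper_bound_on_u} gives
\[
l(g)\leqslant l(\sigma)\int_M e^u\,\frac{dv_\sigma}{A}\leqslant l(\sigma)e^{C(\sigma)}.
\]
With the diameter bound from Theorem \ref{thm_diam_no_conjugate} in hand, the three remaining assertions are mutually equivalent: Theorem \ref{bound_entr_systole} bounds $h_{top}(g)$ in terms of $A$ and $\sys(g)^{-1}$, so systole bounded below forces $h_{top}$ bounded above; conversely the Besson--Courtois--Gallot statement recalled in the introduction gives $\sys(g_n)\to 0\Rightarrow h_{top}(g_n)\to\infty$; and the Guivarch--Ledrappier chain together with $h(g)=h(\sigma)$ from \eqref{relation_drifts_conform} yields $l(g)\geqslant h(\sigma)/h_{top}(g)$, transferring the $h_{top}$ bound to the desired bound on $l$. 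It therefore suffices to prove that $\sys(g)$ is uniformly bounded below on $[\sigma]_A^{\leqslant}$.

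\textbf{Attacking the systole bound.} Assume for contradiction that there is a sequence $g_n=e^{2u_n}\sigma\in[\sigma]_A^{\leqslant}$ with shortest closed $g_n$-geodesic $\gamma_n$ satisfying $l_{g_n}(\gamma_n)\to 0$. For large $n$, $\gamma_n$ is simple and non-contractible, so $l_\sigma(\gamma_n)\geqslant\sys(\sigma)$; combined with $u_n\leqslant C(\sigma)$, the vanishing of $\int_{\gamma_n}e^{u_n}\,d\ell_\sigma$ forces $u_n\to-\infty$ on a positive $\sigma$-fraction of $\gamma_n$. Passing to a subsequence we distinguish two cases for the free homotopy class $[\gamma_n]$: either $[\gamma_n]$ stabilises at some fixed $[\gamma_\infty]$, or the $\sigma$-geodesic length $L_n$ of the representative in $[\gamma_n]$ tends to infinity (since $(M,\sigma)$ carries only finitely many closed geodesics of bounded length). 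The first case is defeated immediately by Theorem \ref{thmintro_geodesics_do_not_shrink} applied with $N=l_\sigma(\gamma_\infty)$.

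\textbf{Main obstacle.} The escape case $L_n\to\infty$ is the heart of the problem. Two complementary tools seem natural. First, conformal invariants: the extremal length $EL([\gamma_n])$, being $\sigma$-invariant, grows with $L_n$ by Maskit's comparison, and dual modulus estimates should force $\gamma_n$ to be surrounded by a conformal annulus of large modulus; translating this into a $g_n$-geometric statement via the fixed area $A$ and the bound $u_n\leqslant C(\sigma)$ should produce the desired lower bound on $l_{g_n}(\gamma_n)$. Second, PDE compactness: the constraints $u_n\leqslant C$, $\int e^{2u_n}\,dv_\sigma=A$, and $\Delta_\sigma u_n\geqslant-1$ (equivalent to $K_{g_n}\leqslant 0$) are precisely the hypotheses to which Moser-type subsolution estimates apply, and one expects $\{u_n\}$ to be pre-compact in $L^p_{\mathrm{loc}}$ outside a $\sigma$-null singular set into which no topologically non-trivial closed curve can be squeezed. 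The crux is that Theorem \ref{thmintro_geodesics_do_not_shrink} is genuinely non-uniform in $N$: producing an $N$-uniform quantitative relation between the conformal geometry of long simple closed $\sigma$-geodesics and the possible $L^\infty$-negativity of $u_n$ along them is the missing ingredient, and it is this obstruction that prevents closing the argument, which is why the statement is recorded here as a conjecture rather than a theorem.
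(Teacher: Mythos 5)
This statement is recorded in the paper as Conjecture~\ref{conj_systole_bounded_below}; the authors say explicitly in the introduction that they did not manage to prove the systole is uniformly bounded below on $[\sigma]_A^{\leqslant}$, so there is no proof in the paper to compare against, and your write-up is appropriately honest about being a partial reduction plus a discussion of obstructions. The reduction itself is sound and essentially reproduces the paper's own remarks preceding Theorem~\ref{thmintro_geodesics_do_not_shrink}: a systole lower bound gives a topological-entropy upper bound by Theorem~\ref{bound_entr_systole}, a linear-drift lower bound via $h(g)=h(\sigma)\leqslant l(g)h_{top}(g)$, and the converse implications follow from Besson--Courtois--Gallot and \eqref{relation_drifts_conform}. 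Your observation that $l(g)\leqslant l(\sigma)e^{C(\sigma)}$ follows directly from \eqref{relation_drifts_conform} and Theorem~\ref{thmintro_upper_bound_on_u} is correct and not spelled out in the paper.

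There is, however, a genuine mistake in your contradiction argument beyond the obstruction you concede. In the ``stabilizing'' case, where $[\gamma_n]=[\gamma_\infty]$ for large $n$, you claim Theorem~\ref{thmintro_geodesics_do_not_shrink} defeats the case immediately. It does not. That theorem (via Proposition~\ref{prop_bounded_length_of_hyperbolic_geodesic}) lower-bounds the $g$-length of a closed \emph{$\sigma$-geodesic}, whereas your $\gamma_n$ is the $g_n$-systole, i.e., the \emph{$g_n$-geodesic} in the class $[\gamma_\infty]$, which is generically not a $\sigma$-geodesic. Since $g_n$ is nonpositively curved, $\gamma_n$ is the shortest curve in $[\gamma_\infty]$ with respect to $g_n$; in particular $l_{g_n}(\gamma_n)\leqslant l_{g_n}(\gamma_\infty)$ for the $\sigma$-geodesic representative $\gamma_\infty$, so the bound $l_{g_n}(\gamma_\infty)\geqslant\eps$ given by the theorem is perfectly compatible with $l_{g_n}(\gamma_n)\to 0$. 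Thus the stabilizing case is also open, not just the $L_n\to\infty$ escape case you flag. The paper's remark after Theorem~\ref{thmintro_geodesics_do_not_shrink} implicitly acknowledges this: the authors note their argument extends to curves with controlled $\sigma$-geodesic curvature, but the $g_n$-systole comes with no such control, since its $\sigma$-geodesic curvature involves derivatives of $u_n$ along $\gamma_n$ for which there is no a priori bound.
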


Finally, in Section~\ref{examples} we build examples, some folkloric, others new, showing that the conditions of conformality and negative curvature are essential for all our established and conjectural bounds.

We will use the following notations. The pair $(M, g)$ denotes a smooth closed Riemannian surface with metric $g$, and $v_g$ is the associated Riemannian measure on $M$. The universal cover of $M$ is denoted by $\wt M$, $\wt g$ is the lifted metric, and $d_{\wt M}(\cdot,\cdot)$ is the associated distance function.
We write $\Delta_g = \mathrm{div}_{g}(\grad_{g} )$ for the Laplacian of $g$ and $0 = \lambda_0(g)< \lambda_1(g)\leqslant \lambda_2(g)\leqslant \hdots \rightarrow \infty$ for its spectrum.
Throughout the text, since we will often have to switch between objects defined for different metrics, we write $g$-length, $g$-ball, $g$-geodesic, $g$-area, etc., to refer to the length, ball, geodesic, or area defined by the metric $g$.
Finally, we will also sometimes abuse terminology and refer to a shortest closed geodesic as ``a systole''.

\section{Restrictions for metrics with fixed total area in a fixed conformal class} \label{sec_restrictions}

In this section, we will prove Theorems \ref{thmintro_upper_bound_on_u} and \ref{thmintro_geodesics_do_not_shrink}, as well as other bounds that we can obtain from our conditions. The proofs in subsections \ref{subsec_upper_bound_conformal} and \ref{bounds_curves} follow essentially one overarching easy idea that we sketch now.

Suppose $\sigma$ is a fixed hyperbolic metric and $g=e^{2u}\sigma$ is a metric in its conformal class. Then, the Gaussian curvature $K_g$ of $g$ satisfy to the following
\[
 \Delta_{\sigma} u + 1 + K_g e^{2u} = 0.
\]
In particular, $g$ has nonpositive curvature if and only if 
\begin{equation}\label{eq_nonpositive_curvature}
 \Delta_{\sigma} u \geqslant -1.
\end{equation}
Now, for a function of two variables $u$, equation \eqref{eq_nonpositive_curvature} still allows a lot of flexibility. However, if $u$ was one-dimensional, then equation \eqref{eq_nonpositive_curvature} becomes very stringent. So, in the arguments, we average $u$ over some closed curves in local charts to obtain a one-dimensional function that satisfies equation \eqref{eq_nonpositive_curvature} and then leverage the condition that the $g$-area stays constant to get our bounds.

\subsection{Upper bound on the conformal factor} \label{subsec_upper_bound_conformal}

\begin{prop}\label{prop_control_of_u_on_circles_at_max}
 Let $(M,\sigma)$ be a hyperbolic surface of genus $\geqslant 2$ and total area $A$. Consider a smooth function $u\colon M \rightarrow \R$ such that $g= e^{2u}\sigma \in [\sigma]_A^{\leqslant}$. Let $\wt u$ be the lift of $u$ to $\wt M$ and $x_{max}\in \wt M$ be a point, where a global maximum of $\wt u$ is achieved. Then,
\[
 \int_{S_{\sigma}(x_{max},R)} \wt u dl_\sigma \geqslant  2\pi\sinh R \left(\max_{M}u - 2\log\left(\cosh\frac{R}{2}\right)\right),
\]
where $S_{\sigma}(x_{max},R)$ is the $\sigma$-sphere of radius $R$ centered at $x_{max}$.
\end{prop}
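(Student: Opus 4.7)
The plan is to exploit the Gaussian curvature equation recalled at the end of the introduction to Section~\ref{sec_restrictions}, which tells us that nonpositive curvature of $g = e^{2u}\sigma$ is equivalent to the pointwise inequality $\Delta_\sigma u \geqslant -1$ on $M$, and hence $\Delta_\sigma \wt u \geqslant -1$ on $\wt M$. Since $\wt M$ is the hyperbolic plane, the exponential map at $x_{max}$ is a global diffeomorphism, so I can use geodesic polar coordinates $(r,\theta)$ centered at $x_{max}$ on all of $\wt M$. In these coordinates the $\sigma$-Laplacian reads $\Delta_\sigma = \partial_r^2 + \coth r\,\partial_r + \sinh^{-2}(r)\,\partial_\theta^2$, and the circles $S_\sigma(x_{max},R)$ have length $2\pi\sinh R$.

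The main computation is to study the spherical average
\[
 \phi(R) \;=\; \frac{1}{2\pi}\int_0^{2\pi} \wt u(R,\theta)\,d\theta \;=\; \frac{1}{2\pi\sinh R}\int_{S_\sigma(x_{max},R)}\wt u\, dl_\sigma.
\]
Differentiating under the integral sign and using that $\int_0^{2\pi}\partial_\theta^2 \wt u\, d\theta = 0$, I obtain
\[
 \phi''(R) + \coth(R)\,\phi'(R) \;=\; \frac{1}{2\pi}\int_0^{2\pi}\Delta_\sigma \wt u(R,\theta)\,d\theta \;\geqslant\; -1,
\]
and the boundary data $\phi(0) = \wt u(x_{max}) = \max_M u$ together with $\phi'(0) = 0$ (the latter because $x_{max}$ is a critical point of $\wt u$, or equivalently because a smooth function's spherical derivative average vanishes at the center).

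Now I would turn the differential inequality into an explicit lower bound on $\phi$ by integrating twice. Multiplying by $\sinh R$ gives the compact form $(\sinh R\cdot\phi'(R))' \geqslant -\sinh R$, and integrating from $0$ to $R$ with $\sinh 0 \cdot \phi'(0) = 0$ yields
\[
 \sinh R \cdot \phi'(R) \;\geqslant\; -(\cosh R - 1), \qquad\text{so}\qquad \phi'(R) \;\geqslant\; -\tanh(R/2).
\]
Integrating once more from $0$ to $R$ and using $\int_0^R \tanh(t/2)\,dt = 2\log\cosh(R/2)$ gives $\phi(R) \geqslant \max_M u - 2\log\cosh(R/2)$. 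Multiplying through by $2\pi\sinh R$ recovers the stated inequality.

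There is no serious obstacle here: the only points requiring some care are verifying $\phi'(0)=0$ at a smooth maximum, and using that polar coordinates centered at $x_{max}$ are global on the hyperbolic plane so that the ODE argument extends to all $R \geqslant 0$. One could also phrase the proof as a maximum principle comparison against the explicit radial function $v(r) = -2\log\cosh(r/2)$, which satisfies $\Delta_{\mathbb{H}^2} v = -1$ and thus represents the extremal (constant curvature $0$) case; the inequality above is exactly the statement that the spherical mean of $\wt u - \wt u(x_{max})$ is bounded below by $v$.
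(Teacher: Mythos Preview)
Your proof is correct and follows essentially the same route as the paper's. The paper phrases the first integration as an application of Green's theorem on the ball $B_\sigma(x_{max},\rho)$ to obtain $\int_0^{2\pi}\partial_r u(\rho,\theta)\,d\theta \geqslant -2\pi(\cosh\rho-1)/\sinh\rho$, which is exactly your inequality $\phi'(R)\geqslant -\tanh(R/2)$ after dividing by $2\pi$; both then integrate once more in $R$ to conclude.
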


\begin{proof}
Passing to the universal cover, let $(r,\theta)$ be hyperbolic polar coordinates on the $\sigma$-ball $B_{\sigma}(x_{max},\rho) \subset\wt M$.
Recall that the Laplacian in polar coordinates is given by
\[
 \Delta_{\sigma} = \frac{\partial^2}{\partial r^2} + \frac{\cosh r}{\sinh r} \frac{\partial}{\partial r} + \frac{1}{\sinh^2 r} \frac{\partial^2}{\partial \theta^2}.
\]
Writing $\wt u$ for the lift of $u$ to the universal cover $\wt M$, we can use Green's theorem to get
\begin{align}\label{Green_thm_at_max}
\int_{B_{\sigma}(x_{max},\rho)}\Delta_{\sigma}\wt u dv_{\sigma} = \int_0^{2\pi}\frac{\partial u}{\partial r}(\rho,\theta)\sinh \rho \, d\theta.
\end{align}
The fact that $g$ has nonpositive curvature is equivalent to $\Delta_{\sigma}\wt u \geqslant -1$. Therefore, we have
\begin{align}\label{neg_curv_at_max}
\int_{B_{\sigma}(x_{max},\rho)}\Delta_{\sigma}\wt u dv_{\sigma}\geqslant -\vol(B_{\sigma}(x_{max},\rho)) = -2\pi(\cosh \rho - 1).
\end{align}
As a result, by (\ref{Green_thm_at_max}) and (\ref{neg_curv_at_max}) we obtain
\begin{align*}
\int_0^{2\pi}\frac{\partial u}{\partial r}(\rho,\theta)\, d\theta\geqslant -2\pi\frac{\cosh \rho - 1}{\sinh \rho}.
\end{align*}
Integrating the above inequality with respect to $\rho$ from $0$ to $R$ gives the proposition:
\begin{equation*}
 \int_0^R \int_0^{2\pi}\frac{\partial u}{\partial r}(\rho,\theta)\, d\theta d\rho = \int_0^{2\pi}u(R,\theta)\, d\theta -2\pi\max_{M}u \geqslant  - 4\pi\log\left(\cosh\frac{R}{2}\right). \qedhere
\end{equation*}
\end{proof}

\begin{thm}\label{bound_average_below_max_above}
Let $(M,\sigma)$ be a hyperbolic surface of genus $\geqslant 2$ and total area $A$. Then, there exists a positive constant $C = C(\sigma)$ such that for any smooth function $u\colon M \rightarrow \R$ such that $g= e^{2u}\sigma\in [\sigma]_A^{\leqslant}$ we have $u(x)\leqslant C$ for any $x\in M$.
\end{thm}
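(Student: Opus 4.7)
The plan is to combine Proposition \ref{prop_control_of_u_on_circles_at_max} with Jensen's inequality and the area constraint $\int_M e^{2u}\,dv_\sigma = A$. Writing $M_0 := \max_M u = \wt u(x_{max})$, the proposition says that the $\sigma$-arc-length average of $\wt u$ on the sphere $S_\sigma(x_{max},R)$ is at least $M_0 - 2\log\cosh(R/2)$. If $M_0$ is very large, then this average remains large on every sphere of radius $R$ up to something of order $M_0$, so a whole ball around $x_{max}$ must carry a large integral of $e^{2\wt u}$, which will eventually outgrow $A$ and yield a contradiction.

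First I would set $r_0 := \tfrac{1}{2}\sys(\sigma)>0$, a constant depending only on $\sigma$. Since $\sigma$ is hyperbolic (so without conjugate points), the injectivity radius at every point of $M$ is at least $r_0$, and therefore the $\sigma$-ball $B_\sigma(x_{max},r_0)\subset\wt M$ projects injectively into $M$. For every $R\in(0,r_0]$, Proposition \ref{prop_control_of_u_on_circles_at_max} together with Jensen's inequality (applied to the convex function $t\mapsto e^{2t}$) gives
\begin{equation*}
\frac{1}{2\pi\sinh R}\int_{S_\sigma(x_{max},R)} e^{2\wt u}\,dl_\sigma
\;\geqslant\;\exp\!\Bigl(2\bigl(M_0-2\log\cosh(R/2)\bigr)\Bigr)
\;=\;\frac{e^{2M_0}}{\cosh^4(R/2)}.
\end{equation*}

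Next I would integrate this pointwise inequality over $R\in[0,r_0]$ using hyperbolic polar coordinates to convert the spherical averages into a ball integral. A direct substitution $v=\cosh(R/2)$ shows
\begin{equation*}
\int_0^{r_0}\frac{2\pi\sinh R}{\cosh^4(R/2)}\,dR \;=\; 4\pi\tanh^2(r_0/2) \;=:\; \kappa(\sigma)>0,
\end{equation*}
so that, since the ball projects injectively into $M$,
\begin{equation*}
A \;=\;\int_M e^{2u}\,dv_\sigma \;\geqslant\; \int_{B_\sigma(x_{max},r_0)} e^{2\wt u}\,dv_\sigma \;\geqslant\; \kappa(\sigma)\, e^{2M_0}.
\end{equation*}
Taking logarithms yields $M_0\leqslant \tfrac{1}{2}\log\bigl(A/\kappa(\sigma)\bigr)=:C(\sigma)$, which is the desired bound.

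No single step looks genuinely hard; the only point requiring a little care is the passage from the universal cover to $M$, for which having the fixed positive lower bound $\sys(\sigma)/2$ on the injectivity radius is essential. If one wanted a sharper constant, one could optimize the radius of integration in $R$ (allowing $R$ to depend on $M_0$ as long as a covering/area argument still works), but for the qualitative statement the choice $R=r_0=\sys(\sigma)/2$ suffices.
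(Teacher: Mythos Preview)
Your proof is correct and follows essentially the same route as the paper: apply Jensen's inequality to the spherical averages bounded by Proposition~\ref{prop_control_of_u_on_circles_at_max}, integrate over the ball of radius $\inj(\sigma)=\tfrac{1}{2}\sys(\sigma)$ (which projects injectively to $M$), and compare with the total area $A$. Even the explicit constant $C(\sigma)=\tfrac{1}{2}\log\bigl(A/4\pi\tanh^2(\inj(\sigma)/2)\bigr)$ you obtain matches the paper's.
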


\begin{rem}
This result can be seen as a slight generalization of Schwarz lemma (see, for instance, \cite{Y}), where we do not assume that the curvature is bounded above by a negative constant.
\end{rem}

\begin{proof}
Again passing to the universal cover, let $(r,\theta)$ be hyperbolic polar coordinates on the $\wt\sigma$-ball $B_{\wt\sigma}(x_{max},\inj(\sigma)) \subset\wt M$, where $\inj(\sigma)$ is the injectivity radius of $(M,\sigma)$, and $x_{max}\in \wt M$ is a point where the global maximum of the lift $\wt u$ of $u$ is achieved.

By Proposition \ref{prop_control_of_u_on_circles_at_max} and Jensen's inequality, for every $0<R\leqslant \inj(\sigma)$ we have
\begin{align*}
\log\left(\int_0^{2\pi}e^{2u(R,\theta)}\,d\theta\right)\geqslant \log (2\pi) + \frac{1}{2\pi}\int_0^{2\pi}2u(R,\theta)\,d\theta\geqslant \log(2\pi)+2\max_M u - 4\log\left(\cosh\frac{R}{2}\right).
\end{align*}
Therefore, we obtain
\begin{equation*}
\int_0^{2\pi}e^{2u(R,\theta)}\,d\theta\geqslant 2\pi\frac{\exp(2\max_M u)}{\cosh^4\frac{R}{2}}.
\end{equation*}

The $g$-area of $B(x_{max}, \inj(\sigma))$ is bounded above by $A$, since the projection to $M$ restricts to a bijection on $B(x_{max}, \inj(\sigma))$. So, using the above inequalities, we obtain
\begin{multline*}
A \geqslant \int_0^{\inj(\sigma)}\int_0^{2\pi}e^{2u(r,\theta)}\sinh r\,d\theta dr \geqslant 2\pi\exp(2\max_M u)\int_0^{\inj(\sigma)}\frac{\sinh r}{\cosh^4\frac{r}{2}}\,dr \\
 = 2\pi\exp(2\max_M u)\int_0^{\inj(\sigma)}\frac{2\sinh \frac{r}{2}\cosh\frac{r}{2}}{\left(\cosh^2\frac{r}{2}\right)^2}\,dr = 4\pi\exp(2\max_M u)\left(1-\frac{1}{\cosh^2\frac{\inj(\sigma)}{2}}\right).
\end{multline*}

Therefore,

\begin{equation*}
\max_M u \leqslant \frac{1}{2}\log\frac{A}{4\pi\tanh^2\frac{\inj(\sigma)}{2}}. \qedhere
\end{equation*}
\end{proof}

\begin{cor}\label{bound_below_on_open_set}
Let $(M,\sigma)$ be a hyperbolic surface of genus $\geqslant 2$ and total area $A$. Then, for any integer $N\geqslant 1$ there exists a positive constant $K_1 = K_1(N, \diam(M))$ such that, for any open set $O\subset\wt M$ with $\diam(O)\leqslant N\diam(M)$ and any smooth function $u\colon M \rightarrow \R$ such that $g= e^{2u}\sigma\in [\sigma]_A^{\leqslant}$, we have
\begin{equation*}
\iint_{O}\wt udv_{\sigma}\geqslant -K_1,
\end{equation*}
where $\wt u$ is the lift of $u$ to the universal cover.
\end{cor}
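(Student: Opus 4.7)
My approach is to establish a pointwise sub-mean-value type estimate for $\wt u$, use the area constraint to produce a well-placed base point, and then appeal to Theorem \ref{bound_average_below_max_above} to pass between integrals over a ball and integrals over $O$. The starting observation is that Proposition \ref{prop_control_of_u_on_circles_at_max} uses no special feature of $x_{max}$ beyond the bound $\Delta_\sigma \wt u \geqslant -1$, which holds throughout $\wt M$. Consequently the same computation (Green's formula on a geodesic disk, followed by integration in $\rho$) yields, for any $y\in\wt M$ and any $R_0>0$,
\[
\int_{B_\sigma(y,R_0)}\wt u\,dv_\sigma \;\geqslant\; 2\pi(\cosh R_0-1)\,\wt u(y) \;-\; 4\pi D(R_0),
\]
where $D(R_0):=\int_0^{R_0}\sinh R\,\log\cosh(R/2)\,dR$ depends only on $R_0$.

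Next, I would use the area normalization to choose $y$ wisely. Since $\int_M e^{2u}\,dv_\sigma = \vol_g(M) = A = \vol_\sigma(M)$, the $\sigma$-average of $e^{2u}$ on $M$ equals $1$, so some point $y_0\in M$ satisfies $u(y_0)\geqslant 0$. Picking any $x_0\in O$ and a lift $y\in\wt M$ of $y_0$ with $d_{\wt M}(x_0,y)\leqslant\diam(M)$, I obtain $\wt u(y)\geqslant 0$ and, by the triangle inequality, $O\subset B_\sigma(y, R_0)$ for $R_0:=(N+1)\diam(M)$. Applying the displayed inequality at this $y$ and discarding the nonnegative term gives
\[
\int_{B_\sigma(y,R_0)}\wt u\,dv_\sigma \;\geqslant\; -4\pi D\bigl((N+1)\diam(M)\bigr).
\]

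To deduce the corresponding bound on $\iint_O \wt u\,dv_\sigma$, I would write $\int_O\wt u = \int_{B_\sigma(y,R_0)}\wt u - \int_{B_\sigma(y,R_0)\setminus O}\wt u$ and bound the second integral by $C\cdot\vol_\sigma(B_\sigma(y,R_0)) = 2\pi C(\cosh R_0-1)$, using the pointwise bound $\wt u\leqslant C$ supplied by Theorem \ref{bound_average_below_max_above}. The resulting lower bound depends only on $N$, $\diam(M)$, and the fixed surface $(M,\sigma)$, which is exactly the constant $K_1=K_1(N,\diam(M))$ required.

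The only genuinely nontrivial step is the first one: recognizing that Proposition \ref{prop_control_of_u_on_circles_at_max} is a pointwise statement in disguise, even though its formulation privileges the maximum. Once this is made explicit, the rest amounts to combining the upper bound of Theorem \ref{bound_average_below_max_above} with the area normalization and an elementary triangle inequality in $\wt M$; I do not anticipate any further obstacle.
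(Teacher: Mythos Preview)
Your proof is correct and follows essentially the same route as the paper's: enclose $O$ in a ball of radius $(N+1)\diam(M)$ centered at a point where $\wt u\geqslant 0$, bound $\int_{B}\wt u$ below via the radial averaging of Proposition~\ref{prop_control_of_u_on_circles_at_max}, and then subtract off $\int_{B\setminus O}\wt u$ using the pointwise upper bound from Theorem~\ref{bound_average_below_max_above}. The only cosmetic difference is that the paper centers the ball at a lift of the maximum point $x_{\max}$ (where Proposition~\ref{prop_control_of_u_on_circles_at_max} is already stated), whereas you observe that the same Green--integration argument works at any base point and then pick an arbitrary point with $u\geqslant 0$; since $\max_M u\geqslant 0$ by the area normalization, these two choices give the same bound and the arguments coincide.
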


\begin{proof}
Given our assumptions on $O$, there exists a $\sigma$-ball $B(x_{max},R)$ centered at a point $x_{max}$, where $R = (N+1)\diam(M)$ the global maximum of $\wt u$ is achieved, such that $O\subset B(x_{max},R)$. 

Then,
\begin{align*}
\iint_{O}\wt udv_{\sigma}= \iint_{B(x_{max},R)}\wt udv_{\sigma} - \iint_{B(x_{max},R)\smallsetminus O}\wt udv_{\sigma}.
\end{align*}
Also, we notice that $\max_M u\geqslant 0$ as $g$ has the same area as $\sigma$. In the polar coordinates $(r,\theta)$ for the metric $\sigma$ in the ball $B(x_{max}, R)$, by Proposition \ref{prop_control_of_u_on_circles_at_max} we have
\begin{align*}
&\iint_{B(x_{max}, R)}\wt udv_{\sigma} = \int_0^{R}\int_0^{2\pi}\wt u\sinh r\, d\theta dr\geqslant -4\pi\int_0^{R}\sinh r\log\left(\cosh\frac{r}{2}\right)\,dr = \nonumber\\ &= -2\pi-4\pi\left((\cosh R+1)\log\left(\cosh\frac{R}{2}\right)-\frac{1}{2}\cosh R\right).
\end{align*}

From the inequality above and Theorem \ref{bound_average_below_max_above}, it is easy to deduce the Corollary.
\end{proof}

Thanks to Theorem \ref{bound_average_below_max_above}, we can bound the Laplace spectrum of nonpositively curved metrics in a given conformal class.

Recall that $0=\lambda_0(g)< \lambda_1(g) \leq \lambda_2(g) \leq \dots $ denotes the Laplace spectrum of the metric $g$ on $M$,
and $\wt\lambda_1(\wt g)$ is the bottom of the $L^2$-spectrum of the lifted Laplacian on the universal cover $\wt M$.

The Min-Max principle state that, for any $k \in \N$, 
\begin{equation}\label{min-max}
\lambda_k(g) = \inf_{V_k}\sup\{R_g(f)| f\neq 0, f\in V_k\},
\end{equation}
where $V_k$ runs through all the $k+1$-dimensional subspaces of the Sobolev space $H^1(M)$ and $R_g(f)$ is the Rayleigh quotient of $f$, i.e.,
\begin{align*}
R_g(f) = \frac{\int\limits_{M}|\nabla_g f|_g^2\,dv_g}{\int\limits_{M}f^2\,dv_g}.
\end{align*} 

Note that, when $g = e^{2u}\sigma$, the Rayleigh quotient becomes
\begin{align*}
R_g(f) = \frac{\int\limits_{M}|\nabla_{\sigma} f|_{\sigma}^2\,dv_{\sigma}}{\int\limits_{M}e^{2u}f^2\,dv_{\sigma}}.
\end{align*}

Hence, using the Min-Max principle, we immediately obtain the following corollary from Theorem~\ref{bound_average_below_max_above}.
\begin{cor}\label{bound_Laplace_spectrum_conformal}
Let $(M,\sigma)$ be a hyperbolic surface of genus $\geq2$ and total area $A$.

There exists a constant $C = C(\sigma)>0$ such that, for any Riemannian metric $g\in [\sigma]^{\leqslant}_A$, and all $k \in \mathbb{N}$,
\[
\lambda_k(g)\geqslant C\lambda_k(\sigma). 
\]

In particular, for any nonpositively curved metric in a fixed conformal class, the function $\lambda_1(g) \vol_g(M)$ is uniformly bounded away from $0$.

Similarly, on the universal cover, we have 
\[
 \wt \lambda_1(\wt g) \geqslant C \wt \lambda_1(\wt \sigma).
\]
\end{cor}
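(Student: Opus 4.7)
The plan is to combine the pointwise upper bound $u \leq C(\sigma)$ from Theorem \ref{bound_average_below_max_above} with the Min-Max characterization \eqref{min-max} of the Laplace spectrum. The key input is the conformal invariance of the Dirichlet energy in dimension two: when $g = e^{2u}\sigma$, the identities $|\nabla_g f|_g^2 = e^{-2u}|\nabla_\sigma f|_\sigma^2$ and $dv_g = e^{2u}\, dv_\sigma$ cancel in the numerator of the Rayleigh quotient, producing exactly the formula for $R_g(f)$ recorded in the excerpt just above \eqref{min-max}.

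Using $e^{2u} \leq e^{2C(\sigma)}$ pointwise on $M$, the denominator of $R_g$ is controlled by $\int_M e^{2u} f^2 \, dv_\sigma \leq e^{2C(\sigma)} \int_M f^2 \, dv_\sigma$, and hence $R_g(f) \geq e^{-2C(\sigma)} R_\sigma(f)$ for every non-zero $f \in H^1(M)$. This pointwise inequality on Rayleigh quotients passes through the sup over any fixed $(k+1)$-dimensional subspace $V_k \subset H^1(M)$, and then through the subsequent inf over all such subspaces in \eqref{min-max}, yielding $\lambda_k(g) \geq e^{-2C(\sigma)} \lambda_k(\sigma)$. The ``in particular'' clause about $\lambda_1(g)\vol_g(M)$ is then immediate, since $\vol_g(M) = A$ is constant on $[\sigma]_A^{\leqslant}$.

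For the universal cover assertion, I would use the analogous variational characterization $\wt\lambda_1(\wt g) = \inf\{R_{\wt g}(f) : f \in C_c^\infty(\wt M),\ f \not\equiv 0\}$. The pointwise bound $u \leq C(\sigma)$ lifts directly to $\wt u \leq C(\sigma)$ on $\wt M$, so the same comparison $R_{\wt g}(f) \geq e^{-2C(\sigma)} R_{\wt \sigma}(f)$ holds for every compactly supported test function; taking the infimum yields $\wt \lambda_1(\wt g) \geq e^{-2C(\sigma)} \wt\lambda_1(\wt \sigma)$. There is no substantive obstacle here: once Theorem \ref{bound_average_below_max_above} is in hand, the entire corollary reduces to an algebraic manipulation of Rayleigh quotients together with the standard Min-Max principle, and the proof should be only a few lines.
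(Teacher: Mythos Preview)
Your argument is correct and follows exactly the route the paper takes: the paper records the conformal Rayleigh quotient formula $R_g(f) = \int_M |\nabla_\sigma f|_\sigma^2\,dv_\sigma \big/ \int_M e^{2u} f^2\,dv_\sigma$ and then states that the corollary follows immediately from the Min-Max principle together with Theorem~\ref{bound_average_below_max_above}. You have simply spelled out the one-line comparison $R_g(f) \geq e^{-2C(\sigma)} R_\sigma(f)$ that the paper leaves implicit.
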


\subsection{Lower bounds on the length of some curves}\label{bounds_curves}

In this section, we prove lower bounds for the integral of $u$ on a number of special curves. This results can be translated using Jensen's inequality to lower bounds on the $g$-length of these curves. In particular, one gets Theorem \ref{thmintro_geodesics_do_not_shrink} from Proposition \ref{prop_bounded_length_of_hyperbolic_geodesic} in that way.

\begin{prop}\label{prop_control_of_u_on_circles}
 Let $(M,\sigma)$ be a hyperbolic surface of genus $\geqslant 2$ and total area $A$. Let $x\in \wt M$ and $0<R_1\leq R_2$ arbitrary. There exists a positive constant $K_2= K_2(\sigma, R_1, R_2)$ such that for any smooth function $u\colon M \rightarrow \R$ such that $g= e^{2u}\sigma\in [\sigma]_A^{\leqslant}$ and any $r\in [R_1,R_2]$, we have 
\[
 \int_{S_{\sigma}(x,r)} \wt u dl_\sigma \geqslant - K_2,
\]
where $S_{\sigma}(x,r)$ is the $\sigma$-sphere of radius $r$ centered at $x$ and $\wt u$ is the lift of $u$ to the universal cover.
\end{prop}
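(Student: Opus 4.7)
The plan is to rerun the Green's-theorem argument from Proposition \ref{prop_control_of_u_on_circles_at_max}, but since $x$ is now arbitrary we have no pointwise information about $\wt u(x)$ to plug in; instead we feed in the $L^1$ estimate of Corollary \ref{bound_below_on_open_set} and convert it back to pointwise control via monotonicity. Work in hyperbolic polar coordinates $(r,\theta)$ on $\wt M$ centered at $x$, and set $g(r):=\int_0^{2\pi}\wt u(r,\theta)\,d\theta$, so that $\int_{S_\sigma(x,r)}\wt u\,dl_\sigma=\sinh r\cdot g(r)$. It suffices to bound $g(r)$ from below, uniformly for $r\in[R_1,R_2]$, by a constant that depends only on $\sigma$, $R_1$, $R_2$.

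Exactly as in the proof of Proposition \ref{prop_control_of_u_on_circles_at_max}, the nonpositive curvature condition $\Delta_\sigma\wt u\geq-1$ together with Green's theorem yields $g'(r)\geq-2\pi\tanh(r/2)$, so that $h(r):=g(r)+4\pi\log\cosh(r/2)$ is nondecreasing. In addition, Theorem \ref{bound_average_below_max_above} gives $\wt u\leq C(\sigma)$ and hence $g(r)\leq 2\pi C$ for every $r$. Now apply Corollary \ref{bound_below_on_open_set} to $O=B_\sigma(x,R_2)$ (the hypothesis $\diam(O)\leq N\diam(M)$ being satisfied for any integer $N\geq 2R_2/\diam(M)$) to obtain a constant $K_1=K_1(\sigma,R_2)$ with
\[
\int_0^{R_2}g(\rho)\sinh\rho\,d\rho=\iint_{B_\sigma(x,R_2)}\wt u\,dv_\sigma\geq -K_1.
\]
Bounding the outer annulus $[R_1,R_2]$ from above by $2\pi C(\cosh R_2-\cosh R_1)$ and subtracting gives $\int_0^{R_1}g(\rho)\sinh\rho\,d\rho\geq -K_3$ for some constant $K_3=K_3(\sigma,R_1,R_2)$.

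Since $h$ is nondecreasing and $\log\cosh\geq 0$, the inequality $g(\rho)=h(\rho)-4\pi\log\cosh(\rho/2)\leq h(R_1)$ holds for every $\rho\in[0,R_1]$. Integrating against $\sinh\rho$ produces $h(R_1)(\cosh R_1-1)\geq -K_3$, so $h(R_1)\geq -K_3/(\cosh R_1-1)$. Monotonicity of $h$ then yields, for every $r\in[R_1,R_2]$,
\[
g(r)=h(r)-4\pi\log\cosh(r/2)\geq h(R_1)-4\pi\log\cosh(R_2/2),
\]
a quantity depending only on $\sigma$, $R_1$, $R_2$; multiplying by $\sinh r\leq\sinh R_2$ produces the desired $K_2$. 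The main subtlety, and the reason the argument is more delicate than Proposition \ref{prop_control_of_u_on_circles_at_max}, is that neither $\wt u(x)$ alone (which has no uniform lower bound) nor the monotonicity of $h$ alone (which only shows $h\geq 2\pi\wt u(x)$) is useful on its own; the step that does the real work is using the $L^1$ bound of Corollary \ref{bound_below_on_open_set} to anchor $h$ from below at the single point $r=R_1$, after which monotonicity propagates the estimate across the whole interval.
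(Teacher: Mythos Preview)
Your argument is correct, but the route differs from the paper's. The paper also works with $g(r)=\int_0^{2\pi}\wt u(r,\theta)\,d\theta$ and also uses Corollary~\ref{bound_below_on_open_set} as the anchor, but instead of relying on the pointwise upper bound $\wt u\leq C$ from Theorem~\ref{bound_average_below_max_above}, it obtains a \emph{two-sided} bound on $g'$: the lower bound $g'(\rho)\geq-2\pi\tanh(\rho/2)$ as you do, and an upper bound $g'(\rho)\leq AN(\rho)/\sinh\rho$ obtained by covering the annulus $A_\sigma(0,\rho)$ with $N(\rho)$ fundamental domains and using $\int_M\Delta u\,dv_\sigma=0$ to cap $\int_{A_\sigma}\Delta\wt u$ from above. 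With $g'$ Lipschitz-controlled on both sides, the $L^1$ lower bound from Corollary~\ref{bound_below_on_open_set} immediately yields a pointwise lower bound on $g$ throughout $[R_1,R_2]$. Your version trades that fundamental-domain argument for the monotone function $h(r)=g(r)+4\pi\log\cosh(r/2)$: you use only the one-sided bound on $g'$ and instead invoke the upper bound on $\wt u$ to strip off the annulus $[R_1,R_2]$, then exploit monotonicity of $h$ to convert the remaining $L^1$ bound on $[0,R_1]$ into a pointwise bound at $r=R_1$ and propagate forward. This is a bit more elementary (no need for the fundamental-domain step), while the paper's two-sided derivative control is more robust if one later wants quantitative Lipschitz information about $g$. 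One minor simplification: you could apply Corollary~\ref{bound_below_on_open_set} directly to $B_\sigma(x,R_1)$ and skip the subtraction of the outer annulus altogether.
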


\begin{proof}
 Let $(r,\theta)$ be hyperbolic polar coordinates on the $\sigma$-ball $B_{\sigma}(x,R_2) \subset\wt M$.

Let $0\leq \eps \leqslant \rho$ be arbitrary. Let $A_{\sigma}(\eps,\rho) = \{ (r,\theta) \mid \eps \leqslant r \leqslant \rho\}$. There exists an integer $N(\rho)\geqslant 1$ depending on $\rho$ and the hyperbolic metric $\sigma$ such that the annulus $A_{\sigma}(\eps,\rho)$ is contained in at most $N(\rho)$ fundamental domains for $M$ in $\wt M$. Call this minimal union of fundamental domains $F$. Since $\Delta u \geqslant -1$ and 
\[
 \int_F \Delta \wt u dv_{\sigma} = N(\rho) \int_M \Delta u dv_{\sigma}= 0,
\]
we get that 
\[
 \int_{A_{\sigma}(\eps,\rho)} \Delta \wt u dv_{\sigma} = - \int_{F\smallsetminus A_{\sigma}(\eps,\rho)} \Delta \wt u dv_{\sigma} \leqslant \int_{F\smallsetminus A_{\sigma}(\eps,\rho)} dv_{\sigma} \leqslant  AN(\rho).
\]

Now, 
\begin{align*}
 \int_{A_{\sigma}(\eps,\rho)} \Delta \wt u dv_{\sigma} &= \int_0^{2\pi}\int_{\eps}^{\rho} \Delta \wt u \sinh r dr d\theta \\
 &= \int_0^{2\pi} \sinh \rho \frac{\partial u (\rho, \theta)}{\partial r} - \sinh \eps \frac{\partial u (\eps, \theta)}{\partial r} d\theta.
\end{align*}

Hence, taking $\eps=0$ and changing the order of the integration and differentiation, we obtain
\begin{equation*} 
 \frac{\partial }{\partial r} \left. \int_0^{2\pi} u(r, \theta) d\theta\,\right|_{r=\rho} \leqslant \frac{AN(\rho)}{\sinh(\rho)}.
\end{equation*}

Since we also have that $\int_{A_{\sigma}(\eps,\rho)} \Delta \wt u dv_{\sigma} \geq - \vol(A_{\sigma}(\eps,\rho)) $, we deduce

\begin{equation*}
 \frac{\partial }{\partial r} \left. \int_0^{2\pi} u(r, \theta) d\theta\,\right|_{r=\rho} \geqslant - \frac{\vol(A_{\sigma}(0,\rho))}{\sinh(\rho)}.
\end{equation*}

From the control of derivatives given by the two equations above and Corollary \ref{bound_below_on_open_set}, it is easy to deduce the Proposition. 
\end{proof}

\begin{cor}
Let $(M,\sigma)$ be a hyperbolic surface of genus $\geqslant 2$ and total area $A$. Let $0<R_1\leqslant R_2$ be arbitrary. There exists a positive constant $K_3= K_3(\sigma, R_1,R_2)$ such that for any smooth function $u\colon M \rightarrow \R$ such that $g= e^{2u}\sigma\in [\sigma]_A^{\leqslant}$, any $x\in \wt M$ and any $r\in [R_1,R_2]$, we have 
\[
 \int_{L_r} \wt u dl_\sigma \geqslant - K_3,
\]
where $L_r\subset S_{\sigma}(x,r)$ is a piece (a union of pieces) of arcs of the $\sigma$-sphere $S_{\sigma}(x,r)$ of radius $r$ centered at $x$ and $\wt u$ is the lift of $u$ to the universal cover.
\end{cor}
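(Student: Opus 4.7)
The plan is to deduce this corollary directly from the preceding Proposition \ref{prop_control_of_u_on_circles} combined with the pointwise upper bound on $u$ provided by Theorem \ref{bound_average_below_max_above}. The intuition is that the only way the integral of $\tilde u$ over a proper piece $L_r$ of the sphere could fail to be bounded below is if the integral over the complement $S_\sigma(x,r)\smallsetminus L_r$ were very large positively. But Theorem \ref{bound_average_below_max_above} prevents precisely that, since $\tilde u$ is bounded above by a constant depending only on $\sigma$.

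First, I would write
\[
\int_{L_r} \wt u \, dl_\sigma = \int_{S_\sigma(x,r)} \wt u \, dl_\sigma \; - \int_{S_\sigma(x,r)\smallsetminus L_r} \wt u \, dl_\sigma.
\]
By Proposition \ref{prop_control_of_u_on_circles}, the first term on the right is bounded below by $-K_2(\sigma,R_1,R_2)$. For the second term, Theorem \ref{bound_average_below_max_above} gives a constant $C=C(\sigma)$ such that $\wt u(y)\leqslant C$ for all $y\in\wt M$, so
\[
\int_{S_\sigma(x,r)\smallsetminus L_r} \wt u \, dl_\sigma \;\leqslant\; C \cdot l_\sigma\bigl(S_\sigma(x,r)\smallsetminus L_r\bigr) \;\leqslant\; C \cdot l_\sigma\bigl(S_\sigma(x,r)\bigr) \;=\; 2\pi C \sinh r \;\leqslant\; 2\pi C \sinh R_2.
\]
Combining these two bounds yields
\[
\int_{L_r} \wt u \, dl_\sigma \;\geqslant\; -K_2(\sigma,R_1,R_2) - 2\pi C(\sigma)\sinh R_2,
\]
which is a constant depending only on $\sigma$, $R_1$, and $R_2$, so we can set $K_3$ equal to this constant.

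There is really no main obstacle here: every ingredient has already been established. The only point requiring (minimal) care is to remark that if $L_r$ is a union of arcs, the argument is identical, since one still bounds the complement above by $2\pi C \sinh R_2$ times a universal constant. Nothing in the argument uses a lower bound on the length of $L_r$, which is consistent with the statement: the bound is uniform over all subsets $L_r$ of the sphere.
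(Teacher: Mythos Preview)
Your proof is correct and follows exactly the same approach as the paper: decompose the integral over $L_r$ as the full-sphere integral minus the integral over the complement, bound the first from below by Proposition~\ref{prop_control_of_u_on_circles}, and bound the second from above using the pointwise bound $\wt u\leqslant C$ from Theorem~\ref{bound_average_below_max_above}. You even make explicit the final step $l_\sigma(S_\sigma(x,r)\smallsetminus L_r)\leqslant 2\pi\sinh R_2$, which the paper leaves to the reader.
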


\begin{proof}
The result follows from Proposition \ref{prop_control_of_u_on_circles} and Theorem \ref{bound_average_below_max_above}.
\begin{equation*}
\int_{L_r} \wt u dl_\sigma = \int_{S_{\sigma}(x,r)} \wt u dl_\sigma - \int_{S_{\sigma}(x,r)\smallsetminus L_r} \wt u dl_\sigma\geqslant -K_2-Cl_{\sigma}(S_{\sigma}(x,r)\smallsetminus L_r)
\end{equation*}
\end{proof}

\begin{prop}\label{prop_bounded_length_of_hyperbolic_geodesic}
Let $(M,\sigma)$ be a hyperbolic surface of genus $\geqslant 2$ and total area $A$. For any closed $\sigma$-geodesic $\gamma$, there exists a positive constant $K_4 = K_4(l_\sigma(\gamma))$ such that, for any smooth function $u\colon M \rightarrow \R$ such that $g= e^{2u}\sigma\in [\sigma]_A^{\leqslant}$, we have 
\[
 \int_{\gamma} u dl_\sigma \geq - K_4.
\]
\end{prop}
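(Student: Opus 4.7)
The overall approach is to work in Fermi coordinates $(t,\rho)$ around a lift $\wt\gamma$ of $\gamma$ in the universal cover $\wt M$, in which $\sigma$ reads $ds^2 = d\rho^2 + \cosh^2(\rho)\,dt^2$ and the volume form is $\cosh(\rho)\,dt\,d\rho$. Set
\[
\phi(\rho) = \int_0^{l_\sigma(\gamma)} \wt u(t,\rho)\,dt,
\]
so that $\phi(0) = \int_\gamma u\,dl_\sigma$ is the quantity to bound from below, and let $\psi(\eta) = \phi(\eta) + \phi(-\eta)$. Let $\wt T_\eta$ denote the strip $\{(t,\rho) : 0\leqslant t\leqslant l_\sigma(\gamma),\, -\eta\leqslant \rho\leqslant \eta\}$ of one fundamental period around $\wt\gamma$.

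The first step is to apply Green's theorem on $\wt T_\eta$: the boundary contributions from the two ``vertical'' sides at $t = 0$ and $t = l_\sigma(\gamma)$ cancel, because $\wt u$ is invariant under the deck transformation $t\mapsto t+l_\sigma(\gamma)$, which gives
\[
\int_{\wt T_\eta}\Delta_\sigma\wt u\,dv_\sigma = \cosh(\eta)\,\psi'(\eta).
\]
The pointwise lower bound $\Delta_\sigma \wt u \geqslant -1$ yields $\psi'(\eta) \geqslant -2l_\sigma(\gamma)\tanh(\eta)$. For an upper bound on $\psi'(\eta)$, I would mimic the proof of Proposition \ref{prop_control_of_u_on_circles}: enclose $\wt T_\eta$ in a union $F$ of $N = N(l_\sigma(\gamma),\eta,\sigma)$ fundamental domains, use $\int_F \Delta_\sigma \wt u\,dv_\sigma = N\int_M \Delta_\sigma u\,dv_\sigma = 0$, and deduce $\int_{\wt T_\eta}\Delta_\sigma\wt u\,dv_\sigma \leqslant \vol(F\smallsetminus \wt T_\eta) \leqslant NA$. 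Fixing any $\eta_1$ as a function of $l_\sigma(\gamma)$ and $\sigma$ (small enough to control $N$), this produces a uniform Lipschitz bound $|\psi'(\eta)| \leqslant L$ on $[0,\eta_1]$, with $L$ depending only on $l_\sigma(\gamma)$, $A$, and $\sigma$.

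The second step is to relate $\psi$ to a volume integral of $\wt u$:
\[
\int_{\wt T_{\eta_1}} \wt u \, dv_\sigma = \int_0^{\eta_1} \cosh(\rho)\,\psi(\rho)\,d\rho.
\]
Since $\diam(\wt T_{\eta_1}) \leqslant l_\sigma(\gamma) + 2\eta_1$, Corollary \ref{bound_below_on_open_set} bounds the left-hand side below by some $-K_1$ depending on $l_\sigma(\gamma)$ and $\sigma$. Combining this with the Lipschitz estimate $\psi(\rho) \leqslant \psi(0) + L\rho$ and computing $\int_0^{\eta_1}\rho\cosh(\rho)\,d\rho = \eta_1\sinh\eta_1 - \cosh\eta_1 + 1$, one obtains
\[
\psi(0)\sinh(\eta_1) + L\bigl(\eta_1\sinh\eta_1 - \cosh\eta_1 + 1\bigr) \geqslant -K_1,
\]
which gives an explicit lower bound on $\psi(0) = 2\phi(0) = 2\int_\gamma u\,dl_\sigma$, proving the proposition.

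The main obstacle I foresee is bookkeeping: ensuring that the covering number $N$, the Lipschitz constant $L$, and the constant from Corollary \ref{bound_below_on_open_set} all depend only on $l_\sigma(\gamma)$ and $\sigma$. This amounts to choosing $\eta_1$ as a fixed function of $l_\sigma(\gamma)$ (or even as a universal constant depending only on $\sigma$), and checking that the strip's diameter remains comparable to $l_\sigma(\gamma)$. Once these dependencies are pinned down, the remaining estimates are routine.
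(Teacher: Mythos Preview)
Your proposal is correct and follows essentially the same approach as the paper's proof: work in Fermi coordinates around $\gamma$, use Green's theorem together with $\Delta_\sigma u\geqslant -1$ and the fundamental-domain trick to control the derivative of the averaged function $\rho\mapsto\int \wt u(t,\rho)\,dt$, and then invoke Corollary~\ref{bound_below_on_open_set} on a strip to pin down its value at $\rho=0$. Your symmetrization via $\psi$ and direct Lipschitz-plus-integration argument is a clean variant of the paper's one-sided setup and contradiction step, but the underlying ingredients and dependencies on $l_\sigma(\gamma)$ are identical.
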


\begin{rem}
Our proof gives a counter-intuitive dependency of $K_4=K_4(l_\sigma(\gamma))$ in the geodesic $\gamma$. Indeed, due to the use of Corollary \ref{bound_below_on_open_set}, $K_4(l_\sigma(\gamma))$ goes to infinity as $l_\sigma(\gamma)$ goes to infinity.
One could presumably obtain a uniformly bounded constant by being a bit more careful, but we choose not to pursue that direction as it does not lead to any significant improvement regarding Conjecture~\ref{conj_systole_bounded_below}.

 Note also that the proof shows that for \emph{any} curve $\alpha$ that is obtained as an equidistant curve from a $\sigma$-geodesic $\gamma$, we have $ \int_{\alpha} u dl_\sigma \geq - K_4$.
\end{rem}

\begin{proof}
Let $M_c$ be a cylindrical cover of $M$ with fundamental group generated by the homotopy class of $\gamma$. Consider hyperbolic normal polar coordinates $(r,\theta)$ on $M_c$, where $\gamma$ is described by the equation $r=0$ and $\theta\in[0;l_{\sigma}(\gamma)]$. Let $u_c$ be the lift of $u$ to $M_c$. 

Let $0\leq \eps \leq \rho$ be arbitrary and $A_{\sigma}(-\rho, -\eps) = \{ (r,\theta) \mid -\rho \leq r \leq -\eps\}$.

Green's theorem implies that
\begin{align}\label{Green_collar}
\iint_{A_{\sigma}(-\rho, -\eps)}\Delta_{\sigma}u_c dv_{\sigma} &= \int_0^{l_{\sigma}(\gamma)}\frac{\partial u_c}{\partial r}(-\eps,\theta)\cosh \eps\, d\theta - \int_0^{l_{\sigma}(\gamma)}\frac{\partial u_c}{\partial r}(-\rho,\theta)\cosh \rho\, d\theta \\
 & = \cosh \eps \frac{\partial}{\partial r}\left. \int_0^{l_{\sigma}(\gamma)} u_c(r,\theta)\,d\theta\,\right|_{r=-\eps} - \cosh \rho \frac{\partial}{\partial r} \left.\int_0^{l_{\sigma}(\gamma)} u_c(r,\theta)\,d\theta\,\right|_{r=-\rho}.\nonumber
\end{align}

Recall that $g$ being nonpositively curved implies that $\Delta_{\sigma}u\geqslant -1$, so $\Delta_{\sigma}u_c\geqslant -1$. Therefore, 
\begin{align*}
\iint_{A_{\sigma}(-\rho, -\eps)}\Delta_{\sigma}u_c dv_{\sigma}\geqslant -\vol(A_{\sigma}(-\rho, -\eps)) = -(\sinh \rho - \sinh \eps)l_{\sigma}(\gamma)
\end{align*}

Now, by Corollary \ref{bound_below_on_open_set} applied to $A_{\sigma}(-\diam(M),0)$ and the equality 
$$
\iint_{A_{\sigma}(-\diam(M), 0)}u_c\,dv_{\sigma} = \int_0^{\diam(M)}\int_0^{l_\sigma(\gamma)}u \cosh r\,d\theta dr,
$$
there exists a positive constant $K_5 = K_5(\diam(M), l_{\sigma}(\gamma),\sigma)$ and $R_1\in[0;\diam(M)]$ such that 
\begin{equation*}
\int_0^{l_{\sigma}(\gamma)}u_c(-R_1,\theta)\, d\theta\geqslant -\frac{K_5}{\diam(M)}.
\end{equation*}

\begin{rem}
The constant $K_5$ goes to infinity as the length of $\gamma$ goes to infinity due to the use of Corollary \ref{bound_below_on_open_set}.
\end{rem}

If $R_1=0$, then the Proposition is proven. 
So we suppose that $R_1\neq 0$. Then, there exists a constant $R_2\geqslant R_1$ and smaller than some constant $R = R(\diam(M), K_5, \sigma)$ such that 
\begin{align*}
\int_0^{l_{\sigma}(\gamma)}u_c(-R_2,\theta)\, d\theta\geqslant -\frac{K_5}{\diam(M)},\\
\frac{\partial}{\partial r}\left.\int_0^{l_{\sigma}(\gamma)}u_c(r,\theta)\, d\theta \, \right|_{r=-R_2}\geqslant -K_5.
\end{align*}

Indeed, either $R_1$ itself works, or
\[
\frac{\partial}{\partial r}\left.\int_0^{l_{\sigma}(\gamma)}u_c(r,\theta)\, d\theta \, \right|_{r=-R_1}< -K_5 \leq 0. 
\]
So, at least locally around $-R_1$, the function $\int_0^{l_{\sigma}(\gamma)}u_c(r,\theta)\, d\theta$ is strictly decreasing. 
Then, for all $r \geq R_1$, sufficiently close to $R_1$, we have
\[
 \int_0^{l_{\sigma}(\gamma)}u_c(-r,\theta)\, d\theta> \int_0^{l_{\sigma}(\gamma)}u_c(-R_1,\theta)\, d\theta \geqslant  -\frac{K_5}{\diam(M)}.
\]
However, since $u$ is uniformly bounded above (by Theorem \ref{bound_average_below_max_above}), for all $r$ we have, 
\[ 
\int_0^{l_{\sigma}(\gamma)}u_c(-r,\theta)\, d\theta\leqslant Cl_{\sigma}(\gamma).
\] 
 Hence we cannot have
\[
\frac{\partial}{\partial r}\left.\int_0^{l_{\sigma}(\gamma)}u_c(r,\theta)\, d\theta \, \right|_{r}< -K_5, 
\]
for all $r\geq R_1$. Thus $R_2$ exists as claimed.

Now take $\rho = R_2$ in equation (\ref{Green_collar}) and recall that $R_2\leqslant R$. Assume that for any $D>0$ there exists $u$ such that $\int_{\gamma} u dl_\sigma< -D$. Then, there exists $\eps\in[0;R_2]$ such that 
\begin{equation*}
\frac{\partial}{\partial r} \left.\int_0^{l_{\sigma}(\gamma)} u_c(r,\theta)\,d\theta\,\right|_{r=-\eps}\leqslant -\frac{D-\frac{K_5}{\diam(M)}}{R}.
\end{equation*}   
 
The constant $D$ can be chosen arbitrary large. As a result, we obtain a contradiction as the left-hand side of the equality (\ref{Green_collar}) is bounded below and the right-hand side of it can be arbitrary small. Therefore, the Proposition follows.
\end{proof}

Using Jensen's inequality we can restate all the results of this section, replacing the integral of $u$ over the curves by the $g$-length of the curves. In particular, Theorem \ref{thmintro_geodesics_do_not_shrink} is a direct corollary of Proposition \ref{prop_bounded_length_of_hyperbolic_geodesic}.

\subsection{Upper bound on diameter for metrics without conjugate points}

As mentioned in the introduction, a direct consequence of the upper bound on the conformal factor given by Theorem \ref{bound_average_below_max_above} is that the diameter stays bounded in a class $[\sigma]_A^{\leqslant}$. But we can further relax our assumptions and still get a bound on the diameter.
\begin{thm}\label{thm_diam_no_conjugate}
Let $(M,\sigma)$ be a compact hyperbolic surface of genus $\geqslant2$ and total area $A$. Then, there exists a constant $D= D(\sigma)>0$ such that for any smooth metric $g$ without conjugate points and total area $A$ that is conformally equivalent to $\sigma$, we have 

\[
\mathrm{diam}(M,g)\leqslant D. 
\]

\end{thm}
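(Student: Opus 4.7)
The plan is to reduce to a uniform upper bound $\max_M u \leqslant C(\sigma)$, since this immediately yields $\diam(M,g) \leqslant e^{C(\sigma)}\,\diam(M,\sigma)$: for any $x,y\in M$ a $\sigma$-geodesic $\alpha$ joining them has $g$-length $\int_\alpha e^u\,dl_\sigma \leqslant e^{C(\sigma)}\,\diam(M,\sigma)$. In Theorem~\ref{bound_average_below_max_above} this bound was produced via the pointwise inequality $\Delta_\sigma u \geqslant -1$, coming from $K_g \leqslant 0$. Under no conjugate points, $K_g$ may be positive and this pointwise control fails, so we must find an integrated substitute. Using $\Delta_\sigma u = -1 - K_g e^{2u}$ and Green's theorem on a $\sigma$-ball $B_\sigma(x_{max},\rho)$ around a maximum $x_{max}$ of the lift $\widetilde u$, it suffices to bound $\int_{B_\sigma(x_{max},\rho)} K_g\,dv_g$ from above in terms of $\sigma$ and $\rho$ alone.

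The no-conjugate-points hypothesis enters through Gauss--Bonnet on $g$-geodesic disks. Since $\exp^g_x$ is a diffeomorphism on $\wt M$, the disk $B_{\wt g}(x,R)$ is an embedded topological disk whose boundary, parametrized by angle $\theta$, has arc-length element $J_\theta(R)\,d\theta$, where $J_\theta$ is the $g$-Jacobi field along the geodesic leaving $x$ at angle $\theta$ with $J_\theta(0)=0$, $J_\theta'(0)=1$. Gauss--Bonnet then yields
\begin{equation*}
\int_{B_{\wt g}(x,R)} K_g\,dv_g \;=\; 2\pi - l_g'(R), \qquad l_g(R) := \int_0^{2\pi} J_\theta(R)\,d\theta.
\end{equation*}
From the expansion $J_\theta(R) = R + O(R^3)$ one gets $l_g'(R) = 2\pi + O(R^2) > 0$ for small $R$, giving $\int_{B_{\wt g}(x,R)} K_g\,dv_g \leqslant 2\pi$ on an initial interval of radii. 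The area constraint $\int_0^R l_g(r)\,dr \leqslant A$ (valid while $B_{\wt g}(x,R)$ injects into $M$), combined with the fact that $l_g(R) > 0$ for all $R > 0$ (no conjugate points), allows one to control the deficit $-l_g'(R)$ on longer intervals via the Riccati equation $w'+w^2+K_g=0$ for $w=J_\theta'/J_\theta$, yielding $\int_{B_{\wt g}(x_{max},R)}K_g\,dv_g \leqslant \Phi(R,A)$ on a range of $R$ depending only on $A$.

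It remains to transfer this bound from $g$-disks to the $\sigma$-disk $B_\sigma(x_{max},\rho)$. This is the main obstacle: the obvious comparison $B_\sigma(x_{max},\rho) \subseteq B_{\wt g}(x_{max}, e^{\max u}\rho)$ involves $e^{\max u}$, which is exactly what we are trying to control. The natural way out is a bootstrap: first derive a coarse upper bound on $\max u$ using a generous choice of $R$ (where $\Phi$ is crude but finite), then iterate with a sharpened $R$ to produce the final bound. Plugging the resulting inequality $\int_{B_\sigma(x_{max},\rho)}K_g\,dv_g \leqslant \Psi(\sigma,\rho)$ into Green's theorem gives the analog of Proposition~\ref{prop_control_of_u_on_circles_at_max} under the no-conjugate-points assumption, and then the remainder of the proof of Theorem~\ref{bound_average_below_max_above} (Jensen's inequality and the area constraint $\int_M e^{2u}\,dv_\sigma = A$) yields $\max u \leqslant C(\sigma)$, completing the proof.
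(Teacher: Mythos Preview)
Your approach has a real gap at the transfer step, and it is not merely a matter of filling in details. Even if you establish $\int_{B_{\wt g}(x,R)} K_g\,dv_g \leqslant \Phi(R,A)$, the inclusion $B_\sigma(x_{max},\rho) \subseteq B_{\wt g}(x_{max}, e^{\max u}\rho)$ does \emph{not} give $\int_{B_\sigma(x_{max},\rho)} K_g\,dv_g \leqslant \Phi(e^{\max u}\rho, A)$: since $K_g$ has no sign, the curvature on $B_{\wt g} \smallsetminus B_\sigma$ can be arbitrarily negative, so the integral over the smaller set can exceed that over the larger one by any amount. What you would actually need is a bound on the positive part $\int (K_g)_+\,dv_g$ over $g$-balls, and Gauss--Bonnet alone does not supply this. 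On top of that, the bootstrap is circular as stated: your only comparison between $\sigma$-balls and $g$-balls uses the factor $e^{\max u}$, so any inequality you derive has the form $\max u \leqslant G(e^{\max u})$, and without quantitative control on the growth of $\Phi$ you cannot conclude that this closes. You have identified the obstacle correctly but have not overcome it.

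The paper's argument is entirely different and, notably, does \emph{not} attempt to bound $\max_M u$ (which may well be unbounded under the mere no-conjugate-points hypothesis). Instead it bounds $g$-distances directly via Lemma~\ref{lem_diameter_no_conjugate_point}. In the $\sigma$-annulus $\{\eps \leqslant r \leqslant 2\eps\}$ around any point, the area constraint forces some circle $C_{\bar r}$ to have $g$-length at most a constant $W = W(\sigma,\eps)$ (by Cauchy--Schwarz). Fix $p,q \in C_{\bar r}$; they are joined by two arcs of $C_{\bar r}$, each of $g$-length $\leqslant W$. If some $z$ in the enclosed $\sigma$-ball satisfied $d_g(z,p) > W$ and $d_g(z,q) > W$, then in $\wt M$ the paths from $\wt p$ to $\wt q$ of $g$-length $\leqslant W$ would fall into two distinct components, forcing two distinct $g$-geodesics between $\wt p$ and $\wt q$ --- contradicting the absence of conjugate points. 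Hence every point of $B_\sigma(x,\eps)$ lies within $g$-distance $W$ of $p$ or $q$, giving a local $g$-diameter bound of $3W$; covering $M$ by finitely many such $\sigma$-balls finishes. The no-conjugate-points hypothesis is used \emph{topologically}, as uniqueness of geodesics in the universal cover, and no curvature integral is ever computed.
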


The proof of the Theorem above relies on the following lemma, which says that one cannot increase infinitely the length inside a fixed disc without creating conjugate points. We state the lemma in a more general context as the one needed for Theorem \ref{thm_diam_no_conjugate}, as we will also be using that lemma in the proof of Theorem \ref{example_metric_entropy}.

\begin{lem}\label{lem_diameter_no_conjugate_point}

Let $(M,h)$ be a nonpositively curved surface of genus $\geqslant 2$ and total area $A$. Let $\eps>0$ be arbitrary. Suppose that for some $x\in M$, the metric $h$ is invariant under rotations in the $h$-ball $B_{h}(x,2\eps) \subset M$. Then, there exists a constant $C>0$, depending only on $\eps$ and the metric $h$ such that, for any Riemannian metric $g= e^{2u}h$ with no conjugate points and total area $A$, we have, for all $y,z\in B_{h}(x,\eps)$,
\[
  d_g(y,z) \leq C.
\]
\end{lem}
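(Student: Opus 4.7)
The plan is to set up $h$-polar coordinates around $x$, use an area-based averaging argument to extract a ``good'' radial direction and a ``good'' circle along which $g$-lengths are controlled, and then invoke the no-conjugate-points hypothesis to propagate this control into a uniform $g$-diameter bound on $B_h(x, \eps)$.

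First, using the rotational symmetry of $h$ on $B_h(x, 2\eps)$, I write $h = dr^2 + f(r)^2\, d\theta^2$ in polar coordinates centered at $x$, with $f(0) = 0$, $f'(0) = 1$, and $f'' \geqslant 0$ (the last coming from $K_h \leqslant 0$ via $f'' + K_h f = 0$); in particular $f$ is bounded away from $0$ on any $[r_1, 2\eps]$ with $r_1 > 0$. For $g = e^{2u}h$, the global area bound gives
\begin{equation*}
\int_{r_1}^{\eps}\int_0^{2\pi} e^{2u(r,\theta)} f(r)\, d\theta\, dr \leqslant A.
\end{equation*}
Applying the Cauchy--Schwarz inequality in $\theta$, in the spirit of the proof of Theorem \ref{bound_average_below_max_above}, I extract an $L^1$ estimate
\begin{equation*}
\int_{r_1}^{\eps}\int_0^{2\pi} e^{u(r,\theta)}\, d\theta\, dr \leqslant C_1(h, \eps, r_1),
\end{equation*}
and Markov's inequality then yields a direction $\theta_0$ along which the radial $h$-segment from $r_1$ to $\eps$ has bounded $g$-length, as well as a radius $r_0 \in [r_1, \eps]$ along which the $h$-circle $S_h(x, r_0)$ has bounded $g$-length. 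The singular region $r \in [0, r_1]$ will be handled separately, using $f(r) \sim r$ near $0$ and taking $r_1$ small.

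The main step, and the key use of the no-conjugate-points hypothesis, is to upgrade these bounds at a single $\theta_0$ and $r_0$ into a uniform bound on the radial $g$-length in \emph{every} direction $\theta$. Heuristically, if $\int_0^{\eps} e^{u(r,\theta)}\, dr$ were very large for some direction $\theta$, then $u$ would have to concentrate on a thin radial strip, forcing a sharp positive spike in $K_g = e^{-2u}(K_h - \Delta_h u)$; a Hopf-type integral estimate for geodesic segments without conjugate points (bounding the total positive curvature along such a segment) should then produce a conjugate point along a $g$-geodesic crossing the strip transversely, contradicting the hypothesis on $g$. Once the uniform radial bound is in place, every $y = (r_y, \theta_y) \in B_h(x, \eps)$ is reached from $x$ by a curve of bounded $g$-length (follow the good $\theta_0$-ray outward to radius $r_0$, walk along the good circle $S_h(x, r_0)$ to angle $\theta_y$, then go radially outward from $(r_0, \theta_y)$ to $y$), and the triangle inequality $d_g(y, z) \leqslant d_g(y, x) + d_g(x, z)$ finishes the proof.

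The main obstacle is precisely this third step: turning the heuristic ``a sharp curvature spike forces conjugate points'' into a quantitative, $\theta$-uniform upper bound on $\int_0^\eps e^{u(r,\theta)}\, dr$ depending only on $h$ and $\eps$.
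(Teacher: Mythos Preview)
Your proposal has a genuine gap at precisely the step you flag as the main obstacle: upgrading the single good direction $\theta_0$ to a uniform bound on $\int_0^\eps e^{u(r,\theta)}\,dr$ over all $\theta$. The heuristic you offer is not a mechanism. A large value of $u$ along one $h$-radial segment says nothing about the sign of $K_g = e^{-2u}(K_h - \Delta_h u)$; one can make $u$ enormous while keeping $K_g$ nonpositive, or make any positive part of $K_g$ exponentially small in $\max u$. Hopf-type bounds on integrated curvature apply along \emph{$g$-geodesics}, and you have no control over where those lie relative to your $h$-radial strip, so there is no way to extract a quantitative obstruction. Without this step your path construction collapses: to reach a general $y=(r_y,\theta_y)$ you need either the radial segment in direction $\theta_y$ or the circle at level $r_y$, and you control neither for generic $\theta_y$, $r_y$. (The aside that the region $r\in[0,r_1]$ ``will be handled separately using $f(r)\sim r$'' is likewise empty: $f(r)\sim r$ bounds $h$-lengths, not $e^u$.)

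The paper's argument bypasses all of this with a short topological use of the no-conjugate-points hypothesis. It averages over the \emph{outer} annulus $\{\eps \le r \le 2\eps\}$ (not inside $B_h(x,\eps)$) to find a single circle $C_{\bar r}$, $\bar r\in[\eps,2\eps]$, of $g$-length at most $W = W(\eps,\sup f)$; the point is that this circle \emph{surrounds} all of $B_h(x,\eps)$. Fix any $p,q\in C_{\bar r}$ and $z\in B_h(x,\eps)$, and lift to $\wt M$. Since $g$ has no conjugate points, the exponential map on $\wt M$ is a diffeomorphism and any two points are joined by a unique geodesic; hence the sublevel set $\{L\le W\}$ in the path space from $\wt p$ to $\wt q$ is connected, and the two arcs of $C_{\bar r}$ are joined by a homotopy through paths of length $\le W$. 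But such a homotopy is a map from a disk whose boundary is the circle $C_{\bar r}$, which has winding number $\pm 1$ about the interior point $\wt z$; therefore some intermediate path passes through $\wt z$, and its length is at least $d_g(\wt z,\wt p)+d_g(\wt z,\wt q)$. This forces $\min\{d_g(z,p),d_g(z,q)\}\le W$, and routing through $p$, $q$ and an arc of $C_{\bar r}$ gives $d_g(y,z)\le 3W$ for all $y,z\in B_h(x,\eps)$. No bound on individual radial directions is ever needed.
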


\begin{rem}\label{rem_diameter_no_conjugate_point}
 As we will see in the proof, the constant $C$ actually depends only on $\eps$ and an upper bound for the function $\sqrt{\det h}$ in $B_{h}(x,2\eps)$
\end{rem}

\begin{proof}
 Let $(r, \theta)$ be polar coordinates in the ball $B_{h}(x,2\eps)$. 

The $g$-area of an annulus $T(\eps) = \{(r,\theta)| \eps\leqslant r\leqslant 2\eps, 0\leqslant\theta\leqslant 2\pi \}$ is smaller than $A$ and is equal to
\begin{align*}
\iint_{T(\eps)}e^{2u}dv_{\sigma} = \int_{\eps}^{2\eps}\int_{0}^{2\pi}e^{2u(c_{\theta}(r))}f(r)\,d\theta dr,
\end{align*}
where $f(r) = \sqrt{\det g}$, $\det g$ is the determinant of the metric tensor and $c_\theta(r)$ is a $h$-geodesic radius corresponding to the angle $\theta$. The function $\sqrt{\det g}$ depends only on $r$ in $B_{h}(x,2\eps)$ because $h$ is assumed to be invariant under rotations in that ball.

Therefore, there exists $\bar r\in[\eps; 2\eps]$ such that 
\begin{equation*}
\int_{0}^{2\pi}e^{2u(c_{\theta}(\bar r))}f(\bar {r})\,d\theta\leqslant \frac{A}{\eps}.
\end{equation*}
It follows that
\begin{equation*}
\int_{0}^{2\pi}e^{2u(c_{\theta}(\bar r))}d\theta\leqslant \frac{A}{\eps f(\bar r)}.
\end{equation*}
Moreover, if we denote $C_{\bar r} = \{(r, \theta) \mid r=\bar r,0\leqslant \theta< 2\pi\}$, then the $g$-length of $C_{\bar r}$ is controlled:
\begin{equation*}
 l_g(C_{\bar r}) = \int_{0}^{2\pi}e^{u(c_{\theta}(\bar r))}f(\bar r)\,d\theta\leqslant f(\bar r)\sqrt{2\pi}\left(\int_{0}^{2\pi}e^{2u(c_{\theta}(\bar r))}d\theta\right)^{\frac{1}{2}} \leqslant  \sqrt{\max_{r\in[\eps;2\eps]} f(r)}\left(\frac{2\pi A}{\eps}\right)^{\frac{1}{2}}=:W.
\end{equation*}

As a result, any two points $p,q \in C_{\bar r}$ can be connected by two arcs of $C_{\bar r}$ of $g$-length less than or equal to $W$. Fix any two points $p,q \in C_{\bar r}$. Let $z \in B_h(x,\eps)$. We want to show that either $d_g(z,p) \leq W$, or $d_g(z,q) \leq W$. Once this is established, the lemma will follow with $C= 3W$.

So, suppose this is not the case, i.e., suppose that the $g$-length of any path connecting $p$ to $q$ and passing through $z$ is strictly greater than $W$. 
Let $\wt B_h(x,\eps)$ be a lift of $B_h(x,\eps)$ in the universal cover $\wt M$. Let $\wt p, \wt q, \wt z \in \wt B_h(x,\eps)$ the associated lifts of $p,q$ and $z$. By what we proved so far and our assumption, the space of curves connecting $\wt p$ to $\wt q$ of $g$-length less than or equal to $W$ has two connected components. Therefore, there exist two different geodesics connecting $\wt p$ and $\wt q$. This contradicts the fact that $g$ has no conjugate points. Hence, $z$ is at $g$-distance less than $W$ from either $p$ or $q$.
\end{proof}

The proof of Theorem \ref{thm_diam_no_conjugate} follows easily from Lemma \ref{lem_diameter_no_conjugate_point}

\begin{proof}[Proof of Theorem \ref{thm_diam_no_conjugate}]
Choose a minimal cover of $M$ by balls of radius $\inj(\sigma)$, the injectivity radius of $\sigma$. Let $N(\sigma)$ be the number of balls in the cover. Let $z_1,z_2\in M$. The $\sigma$-geodesic between $z_1$ and $z_2$ is covered by at most $N(\sigma)$ balls. Now, since $\sigma$ is invariant under rotations in every balls of the cover, we can apply Lemma \ref{lem_diameter_no_conjugate_point} with $h=\sigma$. It gives us the existence of a constant $C(\inj(\sigma))$ such that $d_g(z_1,z_2) \leq N(\sigma)C(\inj(\sigma))$.
\end{proof}

\section{Flexibility for negatively curved metrics with fixed total area in a fixed conformal class} \label{sec_flexibility}

In this section, we show that the metric entropy is flexible inside a conformal class and prove Theorem \ref{thmintro_example_small_metric_entropy}.

\begin{thm}\label{example_metric_entropy}
Consider a surface $M$ of genus $\geqslant 2$ and total area $A$. There exists a compact set $\mathcal K$ in the Teichm\"{u}ller space on $M$ on which the following holds. For any positive constant $\eps>0$ there exists a negatively curved Riemannian metric $g$ on $M$, such that:
\begin{enumerate}
 \item the total area of $M$ with respect to $g$ is $A$;
 \item the metric $g$ is conformally equivalent to a hyperbolic metric in $\mathcal K$;
 \item the metric entropy, $h_{\mu}(g)$, of $g$, i.e., the entropy of the geodesic flow of $g$ with respect to its Liouville measure, satisfy
 \[
  0< h_{\mu}(g) \leqslant \eps.
 \]
\end{enumerate}
\end{thm}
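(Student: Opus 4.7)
\medskip

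The strategy is to use the family $\{g_\varepsilon\}$ constructed in \cite{EK}: each $g_\varepsilon$ is smooth and negatively curved with total area $A$, satisfies $h_\mu(g_\varepsilon)\to 0$ as $\varepsilon\to 0$, and admits a uniform upper bound $h_{top}(g_\varepsilon)\leqslant H$. Let $\sigma_\varepsilon$ denote the hyperbolic representative of the conformal class of $g_\varepsilon$ normalized to total area $A$, and write $g_\varepsilon = e^{2u_\varepsilon}\sigma_\varepsilon$. The theorem reduces to showing that $\{[\sigma_\varepsilon]\}_{\varepsilon>0}$ is relatively compact in the Teichm\"uller space $\mathcal{T}(M)$; $\mathcal{K}$ can then be taken to be its closure.

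By Mumford's compactness criterion, this is equivalent to $\inf_{\varepsilon>0}\sys(\sigma_\varepsilon)>0$. I would argue by contradiction: assume $\sys(\sigma_{\varepsilon_n})\to 0$ along a subsequence, with shortest closed $\sigma_{\varepsilon_n}$-geodesics $\gamma_n$. By the collar lemma, $\gamma_n$ is surrounded in $\sigma_{\varepsilon_n}$ by a standard hyperbolic collar of width $w_n\to\infty$ and $\sigma_{\varepsilon_n}$-area $l_{\sigma_{\varepsilon_n}}(\gamma_n)\sinh(w_n)\to\infty$. Since the $g_{\varepsilon_n}$-area of that collar is bounded by $A$, the conformal factor $u_{\varepsilon_n}$ must be very negative on average over most of the collar. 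An averaging argument in the spirit of Propositions~\ref{prop_control_of_u_on_circles} and~\ref{prop_bounded_length_of_hyperbolic_geodesic}, but applied to the equidistant curves $C_r = \{\mathrm{dist}_{\sigma_{\varepsilon_n}}(\cdot,\gamma_n)=r\}$ inside the collar, should produce a radius $r^*$ on which $u_{\varepsilon_n}$ is uniformly small; Jensen's inequality then gives $l_{g_{\varepsilon_n}}(C_{r^*})\to 0$, and since $C_{r^*}$ represents the free homotopy class of $\gamma_n$, this forces $\sys(g_{\varepsilon_n})\to 0$. Combined with a uniform $g_{\varepsilon_n}$-diameter bound, the Besson--Courtois--Gallot inequality \cite[Corollaire 0.6]{BCG_margulis} then forces $h_{top}(g_{\varepsilon_n})\to\infty$, contradicting $h_{top}(g_{\varepsilon_n})\leqslant H$.

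The main obstacle is the quantitative transfer of the systole collapse from $\sigma_{\varepsilon_n}$ to $g_{\varepsilon_n}$. The naive inequality $l_{g_\varepsilon}(\gamma)\leqslant e^{\max u_\varepsilon}l_{\sigma_\varepsilon}(\gamma)$, combined with Theorem~\ref{bound_average_below_max_above}, is not strong enough: the bound on $\max u_\varepsilon$ degenerates like $-\log\inj(\sigma_\varepsilon)$, which balances the decay of $l_{\sigma_\varepsilon}(\gamma_n)$ and yields only $l_{g_\varepsilon}(\gamma_n)\lesssim 1$. The refined argument must simultaneously exploit the global $g_\varepsilon$-area constraint, the curvature inequality $\Delta_{\sigma_\varepsilon}u_\varepsilon\geqslant -1$, and the wide-collar geometry in order to promote a mean bound on $u_{\varepsilon_n}$ over annular regions to a uniform bound on a single equidistant curve. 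A secondary difficulty is the uniform $g_{\varepsilon_n}$-diameter bound needed to invoke the BCG inequality; this should follow by covering the $\sigma_{\varepsilon_n}$-thick part by geodesic discs and applying Lemma~\ref{lem_diameter_no_conjugate_point}, then estimating the $g_{\varepsilon_n}$-diameter of the collar directly from its total area.
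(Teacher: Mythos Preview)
Your route diverges from the paper's and, while salvageable, has a genuine gap as written. The paper does not argue by contradiction, does not use the topological-entropy bound, and does not invoke BCG. Instead it exploits a structural feature of the \cite{EK} family directly: each $g_\varepsilon$ is rotationally invariant on a \emph{fixed} finite cover of $M$ by balls (flat balls and balls around the smoothed cone points), with $\sqrt{\det g_\varepsilon}$ uniformly bounded on those balls independently of $\varepsilon$. This lets one apply Lemma~\ref{lem_diameter_no_conjugate_point} with $h=g_\varepsilon$ and $g=\sigma_\varepsilon$---the \emph{reverse} of the roles you propose---and chain across the cover to bound $\diam(\sigma_\varepsilon)$ uniformly. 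Bounded diameter for hyperbolic metrics gives precompactness in $\mathcal T(M)$, and $\mathcal K$ is the closure.

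The gap in your sketch is precisely the $g_{\varepsilon_n}$-diameter bound needed for BCG. Applying Lemma~\ref{lem_diameter_no_conjugate_point} with $h=\sigma_{\varepsilon_n}$ on the thick part produces constants that depend on a degenerating metric, and ``estimating the $g_{\varepsilon_n}$-diameter of the collar from its total area'' does not work: area alone does not control the diameter of an annulus within a conformal class. The easy repair is to read the bound off the construction itself: the $g_\varepsilon$ coincide with a fixed polyhedral metric $g_0$ outside $\varepsilon$-neighborhoods of the cone points, so $\diam(g_\varepsilon)$ is uniformly controlled by data of $g_0$ alone. With that in hand your systole transfer also becomes simpler than you suggest: pigeonhole the $g_{\varepsilon_n}$-area of the collar over the equidistant curves and apply Cauchy--Schwarz to get $l_{g_{\varepsilon_n}}(C_{r^*})^2\leqslant l_{\sigma_{\varepsilon_n}}(C_{r^*})\cdot A/(2w_n)$, which tends to $0$ since $l_{\sigma_{\varepsilon_n}}(C_r)$ stays bounded on the standard collar while $w_n\to\infty$; no appeal to $\Delta_\sigma u\geqslant -1$ or to Propositions~\ref{prop_control_of_u_on_circles} and~\ref{prop_bounded_length_of_hyperbolic_geodesic} is needed. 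So your strategy can be completed, but it is strictly more roundabout and ultimately rests on the same structural control of $g_\varepsilon$ by $g_0$ that the paper exploits in one step.
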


\begin{rem} 
 As in \cite{EK}, one might want to chose $g$ in the theorem above in such a way that its topological entropy $h_{top(g)}$ is as close to $\left(2\pi|\chi(M)|/A\right)^{1/2}$ as one wishes. However, if we try to do this following \cite{EK} here, then the compact set $\mathcal K$ that we obtain will, a priori, have to become infinitely big. In particular, we do not know whether small values of $h_{\mu}$ forces a gap for $h_{top}$ or not.
\end{rem}

Thanks to Theorem \ref{example_metric_entropy}, we immediately obtain the
\begin{cor}\label{arbitr_small_in_nbd}
Consider a surface $M$ of genus $\geqslant 2$ and $A>0$. There exists a hyperbolic metric $\sigma$ on $M$ of total area $A$ such that in any neighborhood $\mathcal U$ of $\sigma$ in the Teichmuller space for any $\eps>0$ there exists a hyperbolic metric $\sigma'\in \mathcal U$ of total area $A$ with $\inf\limits_{g\in[\sigma']_A^{<}}h_{\mu}(g)<\eps$. 
\end{cor}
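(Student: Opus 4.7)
The plan is a straightforward compactness argument leveraging Theorem \ref{example_metric_entropy}. For each positive integer $n$, apply the theorem with $\eps = 1/n$ to obtain a hyperbolic metric $\sigma_n$ lying in the fixed compact set $\mathcal K \subset \mathrm{Teich}(M)$ together with a negatively curved metric $g_n \in [\sigma_n]_A^{<}$ satisfying $h_{\mu}(g_n)\leqslant 1/n$. The crucial point is that the same compact set $\mathcal K$ works for every value of $\eps$.

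Since the Teichm\"uller space of a closed surface of genus $\geqslant 2$ is metrizable and $\mathcal K$ is compact, the sequence $(\sigma_n)$ admits a subsequence $(\sigma_{n_k})$ converging to some $\sigma \in \mathcal K$. I claim this $\sigma$ has the required property. Given any neighborhood $\mathcal U$ of $\sigma$ in Teichm\"uller space and any $\eps>0$, I choose $k$ large enough that $\sigma_{n_k} \in \mathcal U$ (possible by convergence) and simultaneously $1/n_k < \eps$ (possible since $n_k\to\infty$). Setting $\sigma':=\sigma_{n_k}$ yields
\[
\inf_{g\in[\sigma']_A^{<}} h_{\mu}(g) \;\leqslant\; h_{\mu}(g_{n_k}) \;\leqslant\; \frac{1}{n_k} \;<\; \eps,
\]
which is exactly what the corollary requires.

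There is essentially no obstacle to this plan once Theorem \ref{example_metric_entropy} is available: the argument is purely topological. The only point worth emphasizing is that the $\eps$-independence of $\mathcal K$ in the statement of Theorem \ref{example_metric_entropy} is precisely what makes the compactness extraction work. If $\mathcal K$ were allowed to depend on $\eps$, then accumulation points of the $\sigma_n$ could escape to infinity in Teichm\"uller space and the extraction would collapse; all of the real content is hidden in the construction that produces $\mathcal K$ in the theorem itself.
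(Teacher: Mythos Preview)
Your argument is correct and is exactly the compactness extraction the paper has in mind: the authors do not write out a proof at all, merely stating that the corollary follows immediately from Theorem \ref{example_metric_entropy}, and your subsequence-in-$\mathcal K$ argument is the natural way to make that precise. Your emphasis that the $\eps$-independence of $\mathcal K$ is the crucial input is well placed.
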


\begin{rem}
Notice that one can apply the normalized Ricci flow to a metric  $g\in[\sigma']_A^{<}$ such that $h_{\mu}(g)<\eps$, and, using the continuity of the metric entropy (see \cite{KKPW}), get that all the values in $[\eps, \left(2\pi|\chi(M)|/A\right)^{1/2}]$ are realized as the metric entropy of some metric in $[\sigma']_A^{<}$.
\end{rem}

\begin{rem}
For $\sigma$ as in Corollary \ref{arbitr_small_in_nbd}, we actually expect that $\inf_{g\in[\sigma]_A^{<}}h_{\mu}(g)=0$. Indeed, for any $n$, one can pick $\sigma_n$ a hyperbolic metric and $g_n = e^{2u_n}\sigma_n$ such that $\sigma_n$ converges in the $C^{\infty}$ norm to $\sigma$, and $h_{\mu}(g_n)<1/n$, then, by continuity, one would expect that the metric entropy of $h_n = e^{2u_n}\sigma$ converges to $0$. There are however two big problems to try to make that argument works. First, $h_n$ has no reason a priori to be negatively curved, as the curvature of $g_n$ has to converge to zero. Second, even if we knew that $h_n$ was negatively curved, we also know, from the proof of Theorem~\ref{example_metric_entropy}, that the functions $u_n$ can be $C^0$, but not $C^1$ close as $n$ goes to infinity. Hence one cannot use the known results about continuity of the metric entropy.

In spite of this, we do believe that the metric entropy should be totally flexible in a conformal class. We even believe (Conjecture~\ref{conj_metric_entropy_flexible}) that $\inf_{g\in[\sigma]_A^{<}}h_{\mu}(g)=0$ for \emph{any} hyperbolic metric $\sigma$ on $M$ with total area $A$.
\end{rem}

\begin{cor}\label{cor_metric_entropy_vs_lambda_1}
 Let $\eps>0$ and $f\colon \R \rightarrow \R^{+}$ be any real continuous function such that $f(0)=0$, then there exists a negatively curved metric $g$ such that 
 \[
  f\left(h_{\mu}(g)\right) < \wt\lambda_1(g), \text{ and } h_{\mu}(g) < \eps.
 \]
Furthermore, we can choose $g\in [\sigma]^{<}_A$, where $\sigma$ is a hyperbolic metric inside a fixed compact set of the Teichm\"uller space.
\end{cor}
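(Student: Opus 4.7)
The plan is to combine the flexibility statement for the metric entropy (Theorem~\ref{example_metric_entropy}) with the lower bound on the bottom of the $L^2$-spectrum of the Laplacian on the universal cover (Corollary~\ref{bound_Laplace_spectrum_conformal}). The key point is that both the constant $C(\sigma)$ appearing in the latter bound, and the quantity $\wt\lambda_1(\wt\sigma)$ itself, can be controlled uniformly as $\sigma$ ranges over a compact subset of Teichm\"uller space.

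First I would invoke Theorem~\ref{example_metric_entropy} to obtain the compact set $\mathcal K$ in Teichm\"uller space that the statement promises. For any hyperbolic metric $\sigma$ on $M$, the universal cover is isometric to $\mathbb H^2$ (with curvature $-1$), so $\wt\lambda_1(\wt \sigma) = 1/4$. Next, examining the proof of Theorem~\ref{bound_average_below_max_above}, the constant $C(\sigma)$ in Corollary~\ref{bound_Laplace_spectrum_conformal} is an explicit function of $A$ and of the injectivity radius $\inj(\sigma)$ (roughly of the form $4\pi\tanh^2(\inj(\sigma)/2)/A$). Since $\inj(\sigma)$ is a continuous function on Teichm\"uller space and $\mathcal K$ is compact, there exists $\iota = \iota(\mathcal K)>0$ with $\inj(\sigma)\geqslant \iota$ for all $\sigma\in \mathcal K$. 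Consequently, there is a uniform constant
\[
 L := L(\mathcal K) > 0
\]
such that for every $\sigma \in \mathcal K$ and every $g\in [\sigma]_A^{\leqslant}$,
\[
 \wt\lambda_1(\wt g) \geqslant C(\sigma)\, \wt\lambda_1(\wt\sigma) \geqslant \frac{L}{4}.
\]

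With $L$ in hand, I would use the continuity of $f$ at $0$, together with $f(0)=0$, to choose $\delta>0$ such that $f(t) < L/4$ for all $t\in [0,\delta]$. Then I apply Theorem~\ref{example_metric_entropy} with parameter $\min(\eps,\delta)$ in place of $\eps$. This produces a negatively curved metric $g$ with total area $A$, conformally equivalent to some hyperbolic metric $\sigma\in\mathcal K$, and satisfying $0<h_{\mu}(g)\leqslant \min(\eps,\delta)$. Combining the two estimates,
\[
 f(h_{\mu}(g)) < \frac{L}{4} \leqslant \wt\lambda_1(\wt g),
\]
and $h_{\mu}(g) < \eps$, as required.

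There is essentially no genuine obstacle here, only the bookkeeping verification that the constant from Corollary~\ref{bound_Laplace_spectrum_conformal} depends on $\sigma$ only through $A$ and $\inj(\sigma)$, and hence remains bounded below on the compact $\mathcal K$ provided by Theorem~\ref{example_metric_entropy}. This is clear from the explicit form of the bound obtained in the proof of Theorem~\ref{bound_average_below_max_above}; if one wished to avoid referring to the proof, one could instead observe that both $C(\sigma)$ and $\wt\lambda_1(\wt\sigma)$ vary continuously with $\sigma$ in Teichm\"uller space and so attain a positive minimum on $\mathcal K$.
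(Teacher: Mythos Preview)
Your proof is correct and follows exactly the approach the paper takes: the paper's own proof is a single sentence stating that the result follows directly from combining Corollary~\ref{bound_Laplace_spectrum_conformal} and Theorem~\ref{example_metric_entropy}. You have simply made explicit the one detail left implicit there, namely that the constant $C(\sigma)$ from Corollary~\ref{bound_Laplace_spectrum_conformal} depends only on $\inj(\sigma)$ and hence is uniformly bounded below on the compact set $\mathcal K$.
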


\begin{proof}
This follows directly from the combination of Corollary \ref{bound_Laplace_spectrum_conformal} and Theorem \ref{example_metric_entropy}.
\end{proof}

\begin{proof}[Proof of Theorem \ref{example_metric_entropy}]

In Section 3.1 of \cite{EK}, the second author and Katok builds a family of negatively curved metrics satisfying the conditions 1) and 3). We will show here that this family of example also satisfy the condition 2). To prove that, we show that the hyperbolic metrics which are in the same conformal class have diameter uniformly bounded above, and hence, stay in a compact part of the Teichm\"uller space. The proof of the uniform bound on the diameter follows the same line as the proof of Theorem \ref{thm_diam_no_conjugate}.

We start by a quick description of the construction given in \cite{EK} of the relevant metrics. 

Let $\sigma$ be a fixed hyperbolic metric on $M$ of total area $A$. Let $d>0$ be sufficiently small. 
Let $\mathcal T$ be a triangulation by $\sigma$-geodesic triangles of $M$ such that each edge is a geodesic segment of $\sigma$-length between $d/3$ and $d$. Such a triangulation exists: We can take any geodesic triangulation and refine it in such a way that it satisfies the condition. 

To each triangle $T$ in $\mathcal T$, consider its comparison triangle $T_\ast$ in Euclidean space, i.e., the Euclidean triangle $T_\ast$ has sides of Euclidean length equal to the $\sigma$-length of the corresponding side in $T$. From all the comparison triangles $T_\ast$, we construct a polyhedral surface (homeomorphic to $M$) with conical singularities by gluing them according to the incidence of the original triangles in $\cT$. In particular, we thus obtain a singular metric $g_0$ on $M$ with zero curvature everywhere except from the conical singularities at the vertices of $\cT$ where the angle is larger than $2\pi$. 

For sufficiently small $d$, the ratio of the area between a triangle $T \in \cT$ and its comparison triangle $T_\ast$ becomes arbitrarily close to $1$ (see \cite[Lemma 3.2]{EK}). Hence, we can choose the triangulation $\cT$ in such a way that the total area of $M$ for $g_0$ is comprised between, say, $\left(1-\frac{1}{1000}\right)A$ and $\left(1+\frac{1}{1000}\right)A$.

For all $\alpha>0$ sufficiently smaller than $d$, we can find a family of smooth Riemannian metrics $\{g_{\eps}\}$, $\eps\in(0,\alpha]$, obtained by smoothing of $g_0$, such that (see \cite{EK} for the details):
\begin{itemize}
 \item each metric $g_{\eps}$ has nonpositive curvature;
 \item the metric $g_{\eps}$ coincide with $g_0$ outside of some $\eps$-neighborhoods (for $g_0$) of the conical singularities of $g_0$;
 \item the total area of $M$ for $g_{\eps}$ is between $\left(1-\frac{1}{100}\right)A$ and smaller than $\left(1+\frac{1}{100}\right)A$.
\end{itemize}

Let $p$ be a conical point of $g_0$. For $\eps$ fixed, consider the polar coordinates $(r,\theta)$ centered at $p$ for the metric $g_{\eps}$. As usual, $\theta$ is the angular coordinate and $r$ is the $g_{\eps}$-distance to $p$. The smoothing procedure in Lemma 3.3 of \cite{EK} is such that a sufficiently small $g_{\eps}$-ball centered at the conical point of $g_0$ is invariant under rotations with respect to the center. Therefore, all such $g_0$-balls are also $g_\eps$-balls for any $\eps$ (but of varying radius). Moreover, the construction gives uniform upper and lower bounds on the $g_{\eps}$-radii of such balls in terms of $\alpha$ and $\cT$ (see \cite{EK} for details). Furthermore, since the metrics $g_\eps$ and $g_0$ coincide outside of neighborhoods of the conical points, the $g_0$-balls which do not intersect $\alpha$-neighborhoods of singular points of $g_0$ are also $g_{\eps}$-balls of the same radii.

Let $\{B_{g_0}(x_i,r_i)\}_{i=1,\dots, N}$ be an open cover of $M$ by $g_0$-balls such that, for every $i$, $B_{g_0}(x_i,2r_i)$ is either entirely contained in a flat part of $g_0$, or $x_i$ is a conical point of $g_0$ and $B_{g_0}(x_i,2r_i)$ does not contain any other conical points. We further assume that $\{B_{g_0}(x_i,r_i)\}_{i=1,\dots, N}$ is a minimal such cover, i.e., it has the smallest number of balls among all covers satisfying to our conditions. So $N$ is a number depending only on $g_0$, or equivalently, depending only on the topology of $M$, the starting hyperbolic metric $\sigma$ and the choice of $d$. Let $R=\max_i r_i$. Similarly, $R$ depends only on $g_0$.

We now choose $\alpha$ smaller if necessary, so that all the balls $B_{g_0}(x_i,2r_i)$ that are entirely contained in a flat part of $g_0$ are disjoint from every $\alpha$-neighborhood of the conical points of $g_0$.

Let $\wt C = \sup \{ \sqrt{\det g_\eps} \mid \eps\in(0,\alpha] \}$. Given the smoothing procedure used, $\wt C$ is finite and depends only on $g_0$, $\cT$, $\alpha$ and $R$ (this follows from formula (3.5) in \cite{EK}). 

Since, by construction, for $\eps\in(0,\alpha]$, all of the $g_{\eps}$ are rotationally invariant in each ball $B_{g_0}(x_i,r_i)$, we can apply Lemma~\ref{lem_diameter_no_conjugate_point} to all of them. Moreover, according to Remark \ref{rem_diameter_no_conjugate_point}, the constant $C$ given by the lemma depends only on $g_0$, $\cT$, $\wt C$, $\alpha$ and $R$. 

Thus, we deduce as in the proof of Theorem \ref{thm_diam_no_conjugate} that the diameter of any metric without conjugate points conformally equivalent to one of $g_{\eps}$, $\eps\in(0,\alpha]$, and of total area $A$ is bounded above by $NC$. In particular, for any $\eps$, the hyperbolic metric $\sigma_{\eps}$ of total area $A$, that is conformally equivalent to $g_{\eps}$, has diameter bounded by $N C$. Hence, for any $\eps$, the hyperbolic metrics $\sigma_{\eps}$ must stay in a compact subset of the Teichm\"{u}ller space.

It is shown in Section 3.3 of \cite{EK} that the metric entropy of $g_{\eps}$ tends to $0$ as $\eps$ tends to $0$. 
Now, by construction, the total area of each $g_{\eps}$ is bounded between $\left(1-\frac{1}{100}\right)A$ and $\left(1+\frac{1}{100}\right)A$. Therefore, the metrics $\bar g_{\eps} = g_{\eps} \frac{A}{\vol_{g_{\eps}}(M)}$ are smooth nonpositively curved metrics, of total area $A$, and their metric entropy still converges to $0$ with $\eps$.

Finally, to obtain metrics that are negatively curved, instead of just nonpositively, we can smoothly approximate the metrics $\bar g_{\eps}$ by metrics of negative curvature with total area $A$ that are conformally equivalent to $\bar g_{\eps}$ by essentially taking some negative curvature from the neighborhoods of the conical points and distribute it among regions where we have zero curvature (see Proposition 5.1 in \cite{EK} for more details). Another way to avoid nonpositive curvature is to replace initial triangles of zero curvature by triangles of constant negative curvature close to $0$ and repeat the construction.
\end{proof}

\section{Flexibility for metrics with fixed total area under weaker conditions}\label{examples}
In this section, we construct a number of examples, some relatively well-known, some potentially new, showing that none of the bounds obtained in Section \ref{sec_restrictions} still hold when we relax our hypothesis.
\subsection{Examples of metrics with arbitrary short systole}
\begin{lem}\label{Example_systole}
Let $M$ be a surface of genus $\geqslant 2$ equipped with a hyperbolic metric $\sigma$ of total area $A$. Then, for every $\eps>0$ there exists a Riemannian metric $g$ on $M$ of total area $A$ that is conformally equivalent to $\sigma$ such that $\sys(g) \leq \eps$ and $\diam(M,g) \leq 2\diam(M,\sigma)$.
\end{lem}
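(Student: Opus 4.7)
The idea is to produce $g$ by a conformal perturbation that is strongly pinching on a thin $\sigma$-tube around a fixed simple closed $\sigma$-geodesic and is a barely-noticeable global rescaling elsewhere, chosen so as to preserve both the total area and (up to a factor $\le 2$) the diameter.

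Concretely, I would pick any simple closed $\sigma$-geodesic $\gamma$, say the $\sigma$-systole, of length $L := l_\sigma(\gamma)$, and choose $N>0$ large enough that $Le^{-N}\le\eps$. For small $\delta>0$, let $T_\delta$ be the $\sigma$-tubular neighborhood of $\gamma$ of Fermi width $\delta$ (which exists for $\delta$ below the injectivity radius of the normal exponential map along $\gamma$), and fix a smooth cutoff $\chi_\delta\colon M\to[0,1]$ with $\chi_\delta\equiv 0$ on $T_{\delta/2}$ and $\chi_\delta\equiv 1$ on $M\setminus T_\delta$. For each $c\ge -N$ define the one-parameter family
\[
u_c \;:=\; -N+(N+c)\chi_\delta,
\]
so that $u_c\equiv -N$ on $T_{\delta/2}$, $u_c\equiv c$ on $M\setminus T_\delta$, and $\max_M u_c = c$. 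The map $c\mapsto F(c):=\int_M e^{2u_c}\,dv_\sigma$ is continuous and strictly increasing on $[-N,+\infty)$, with $F(-N)=e^{-2N}A<A$ and $F(c)\to+\infty$ as $c\to+\infty$, so there is a unique solution $c(\delta)$ of $F(c(\delta))=A$. Splitting the integral over $T_{\delta/2}$, $T_\delta\setminus T_{\delta/2}$ and $M\setminus T_\delta$ and bounding $e^{-2N}\le e^{2u_c}\le e^{2c}$ in the middle piece yields the squeeze
\[
\frac{A-e^{-2N}\vol_\sigma(T_{\delta/2})}{A-\vol_\sigma(T_{\delta/2})}\;\le\; e^{2c(\delta)} \;\le\; \frac{A-e^{-2N}\vol_\sigma(T_\delta)}{A-\vol_\sigma(T_\delta)},
\]
whose two sides both tend to $1$ as $\delta\to 0$. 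Hence $c(\delta)\to 0$; in particular $0\le c(\delta)\le\log 2$ for all sufficiently small $\delta$.

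Fix such a $\delta$ and set $g := e^{2u_{c(\delta)}}\sigma$. Then $\vol_g(M)=A$ by the choice of $c(\delta)$. Since $u\equiv -N$ on $\gamma$, we have $l_g(\gamma)=Le^{-N}\le\eps$; as $\gamma$ is a simple non-contractible loop on a surface of genus $\ge 2$, its free homotopy class contains a shortest $g$-geodesic of $g$-length at most $l_g(\gamma)$, forcing $\sys(g)\le\eps$. For the diameter, any two points $p,q\in M$ can be joined by a minimizing $\sigma$-geodesic $\alpha$, and since $\max u = c(\delta)$,
\[
d_g(p,q)\;\le\; l_g(\alpha)\;\le\; e^{\max u}\,l_\sigma(\alpha)\;\le\; e^{c(\delta)}\,\diam(M,\sigma)\;\le\; 2\,\diam(M,\sigma).
\]
I do not anticipate any real obstacle in this approach; the only delicate point is balancing the conflicting demands on $u$ (very negative on $\gamma$, while $\max u$ stays close to $0$) against the area constraint, and this is precisely what the squeeze estimate above delivers for $\delta$ small.
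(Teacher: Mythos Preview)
Your argument is correct and is essentially the same construction as the paper's: pinch conformally in a $\delta$-tube about a $\sigma$-systole, compensate by a constant factor outside, solve for the constant via the area constraint, and let $\delta\to 0$ so that the outside constant tends to $0$ and hence $e^{\max u}\le 2$. Your squeeze estimate for $c(\delta)$ is a slightly more explicit version of the paper's intermediate-value-plus-``$C(\delta)\to 0$'' step, but the route is the same.
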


Notice that the metric we construct for this lemma would fall into the scope of Proposition \ref{prop_bounded_length_of_hyperbolic_geodesic} if it was nonpositively curved. Hence these particular examples must have some positive curvature.
\begin{proof}
Let $\eps>0$. Let $\gamma$ be a systole for $\sigma$ on $M$, i.e., such that $l_{\sigma}(\gamma)=\sys(\sigma)$. We assume that $\eps<\sys(\sigma)$. For any $\delta$, to be specified later, we consider a smooth bump function $\phi\colon \mathbb R \rightarrow \mathbb R$ such that $\phi(0)=1$ and $\phi(t) = 0$ for every $t\notin (-\delta, \delta)$.
We define a smooth function $u\colon M\to \R$ by
\[
u(x) = -\log\left(\frac{\sys(\sigma)}{\eps}\right)\phi(d_{\sigma}(x,\gamma))+C(1-\phi(d_{\sigma}(x,\gamma)))
\]
where $C$ is some real number that will be determined later and $d_{\sigma}(x, \gamma)$ is the $\sigma$-distance from a point $x$ to the curve $\gamma$. In particular, $u$ is equal to $-\log\left(\frac{\sys(\sigma)}{\eps}\right)$ on $\gamma$ and equal to $C$ outside the $\delta$-neighborhood (for $\sigma$) of $\gamma$.

Set $g:=e^{2u}\sigma$. For an appropriate choice of $\delta$ and $C$, we will show that the metric $g$ satisfy the lemma. 

First, notice that the $g$-length of $\gamma$ is equal to $\eps$. This implies that $\sys(g)\leqslant \eps$. As $g$ is in the conformal class of $\sigma$, all we have left to show is that the $g$-volume of $M$ is $A$, and that the $g$-diameter of $M$ is at most twice the $\sigma$-diameter.

Call $V(\gamma,\delta)$ the $\delta$-neighborhood of $\gamma$ for the hyperbolic metric $\sigma$. For sufficiently small $\delta$ we have $\vol_{\sigma}\left(V(\gamma,\delta)\right)\leq A/2$. Then, the $g$-volume of $M$, which is equal to $\int\limits_{M}e^{2u}dv_{\sigma}$, satisfy

\begin{align*}
 \int_{M}e^{2u}dv_{\sigma} &\geqslant e^{2C}\vol_{\sigma}\left(M \smallsetminus V(\gamma,\delta) \right)\geqslant \frac{e^{2C}A}{2}>A \quad \text{if } C> \frac{\log 2}{2} \\
 \int_{M}e^{2u}dv_{\sigma} &\leqslant e^{2C}\vol_{\sigma}\left(M \smallsetminus V(\gamma,\delta)\right) + \vol_{\sigma}\left(V(\gamma,\delta)\right)\leqslant e^{2C}A+\frac{A}{2}<A \quad \text{if } C<-\frac{\log 2}{2}.
\end{align*}

Therefore, using the above inequalities and their continuous dependency on $C$, we can deduce that there exists $C(\delta)$ such that $\vol_{g}(M) = A$. Moreover, $C(\delta)$ converges to zero when $\delta$ does.

Given the construction, it is also clear that, as $\delta$ goes to zero, the diameter of the metric $g$ tends to the diameter of $\sigma$. Hence, we can choose $\delta$ small enough so that our last claim is satisfied.
\end{proof}

If we do not require the condition that $g$ has to be conformally equivalent to $\sigma$ in Lemma~\ref{Example_systole}, then by Section 2.1 in \cite{EK} we have the following
\begin{lem}\label{lem_small_systole_small_diameter}
Suppose $M$ is a surface of genus $\geqslant 2$ and $A>0$. Then, there exists $D>0$ such that, for every $\eps>0$, there exists a negatively curved Riemannian metric $g$ on $M$ of total area $A$, diameter less than $D$, and the $g$-length of its systole is smaller than $\eps$.
\end{lem}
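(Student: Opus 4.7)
The plan is to locally modify a fixed hyperbolic metric $\sigma_0$ in the collar of a short simple closed geodesic, while leaving the metric unchanged elsewhere. Since we are not required to stay in a conformal class, the modification is free to make the curvature arbitrarily negative inside the collar, and this is what allows the core curve to be shrunk to length $\eps$ without lengthening the collar. This is essentially the construction from \cite[Section~2.1]{EK}, recapped below.

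Choose a hyperbolic metric $\sigma_0$ on $M$ of total area $A$ containing a simple closed geodesic $\gamma_0$ of length $l_0$ small enough that the embedded collar of $\gamma_0$ (whose half-width diverges as $l_0\to 0$) contains a Fermi slab $U=\{|r|<\delta\}$, where $\delta$ is chosen large enough that $\cosh(\delta)<\delta\sinh(\delta)$ (any $\delta>1.2$ works). On $U$, the hyperbolic metric has the warped form $dr^2+\cosh^2(r)\,d\theta^2$ with $\theta\in\R/l_0\mathbb{Z}$. For each small $\eps>0$, replace $\sigma_0|_U$ by the warped metric $g_\eps|_U=dr^2+f_\eps(r)^2\,d\theta^2$, where $f_\eps\colon[-\delta,\delta]\to\R_{>0}$ is smooth, even, and strictly convex, with $f_\eps(0)=\eps/l_0$, $f_\eps(\pm\delta)=\cosh(\delta)$, and $f_\eps'(\pm\delta)=\pm\sinh(\delta)$ (so that the metric glues to $\sigma_0$ across $\partial U$). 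Such $f_\eps$ exist because any even strictly convex profile with the prescribed boundary data must have $f_\eps(0)\in(\cosh(\delta)-\delta\sinh(\delta),\cosh(\delta))$, and this interval contains every sufficiently small positive value once its left endpoint has been made negative; one builds $f_\eps$ explicitly by integrating a suitable smooth strictly increasing derivative on $[0,\delta]$ with the correct total integral, and smoothing at the endpoints to obtain a $C^\infty$ match to $\cosh$.

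The resulting metric $g_\eps$ is smooth, has Gaussian curvature $-f_\eps''/f_\eps<0$ on $U$ (by strict convexity) and $-1$ elsewhere, hence is negatively curved everywhere. The curve $\{r=0\}$ is a closed $g_\eps$-geodesic (since $f_\eps'(0)=0$ by evenness) of $g_\eps$-length $f_\eps(0)\,l_0=\eps$, so $\sys(g_\eps)\leqslant\eps$. Any two points in $M$ can be joined by a path that agrees with a $\sigma_0$-minimizing curve outside $U$ and crosses $U$ radially in $g_\eps$-length at most $2\delta$, giving $\diam(M,g_\eps)\leqslant\diam(M,\sigma_0)+2\delta$. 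As $\eps\to 0$ the $g_\eps$-area converges to a fixed finite value, so a uniform small rescaling restores the total area to exactly $A$ without affecting the negative curvature or the systole and diameter bounds qualitatively; this produces the desired $D$ independent of $\eps$. The one delicate step is the construction of the warping profile $f_\eps$ satisfying the convexity, smoothness, and boundary conditions simultaneously, which becomes routine once $\delta$ is chosen with $\cosh(\delta)-\delta\sinh(\delta)<0$.
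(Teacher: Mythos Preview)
Your proposal is correct and follows exactly the approach the paper defers to: the paper gives no self-contained proof of this lemma but simply cites \cite[Section~2.1]{EK}, and what you have written is an accurate recap of that warped-product collar construction (the same construction is described again, with the normalization $\theta\in[0,2\pi)$ and profile $a\cosh(r)$, in the proof of Lemma~\ref{lem_lambda_not_conform}). Two very minor imprecisions worth tightening: (i) the explicit bound $\diam(M,g_\eps)\leqslant\diam(M,\sigma_0)+2\delta$ is not quite justified as stated, since a $\sigma_0$-minimizer need not cross $U$ radially and $f_\eps\leqslant\cosh$ is not automatic from your hypotheses---but the qualitative claim (diameter bounded independently of $\eps$) follows easily because $f_\eps$ is increasing on $[0,\delta]$ and hence $f_\eps\leqslant\cosh(\delta)$, giving a uniform bound on $\diam(U,g_\eps)$; (ii) ``the $g_\eps$-area converges to a fixed finite value'' should read ``is uniformly bounded above and below,'' which is all that is needed for the rescaling step.
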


Here, the hyperbolic metrics in the conformal class of the examples of Lemma \ref{lem_small_systole_small_diameter} must escape every compact of the Teichm\"uller space. Indeed, they are build in such a way (see \cite[section 2.1]{EK}) that Proposition \ref{prop_bounded_length_of_hyperbolic_geodesic} can apply to each of them, i.e, their systole is along a geodesic of the hyperbolic metric in their conformal class. Hence, if the hyperbolic metrics in their conformal class were to stay in a compact of the Teichm\"uller space, then Proposition \ref{prop_bounded_length_of_hyperbolic_geodesic} would give a lower bound on the systole, a contradiction with Lemma \ref{lem_small_systole_small_diameter}.

\subsection{Examples of metrics with arbitrary large diameter}
\begin{lem}\label{Example_diameter}
Let $M$ be a surface of genus $\geqslant 2$, equipped with a hyperbolic metric $\sigma$, of total area $A$. Then, for every $D>0$, there exists a Riemannian metric $g$ on $M$ of total area $A$ that is conformally equivalent to $\sigma$ and $\diam(M,g)>D$.
\end{lem}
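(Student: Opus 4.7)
The plan is to construct $g=e^{2u}\sigma$ which, in a small disk around a chosen point $p\in M$, realises a conformal annulus of arbitrarily large modulus as a long flat cylinder. The guiding quantitative observation is that a flat cylinder of modulus $m$ and area bounded by $A$ has length of order $\sqrt{mA}$, while annular regions $\{\eps<|z|<r_0/2\}$ in a conformal chart have modulus of order $\log(r_0/\eps)$; letting $\eps\to 0$ therefore produces arbitrarily long cylinders while keeping the area bounded.

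Concretely, I would fix $p\in M$ and choose conformal polar coordinates $z=re^{i\theta}$ centered at $p$ (for instance, obtained from hyperbolic polar coordinates via the substitution $|z|=\tanh(r_{\mathrm{hyp}}/2)$), in which $\sigma=\lambda(|z|)^2|dz|^2$ for a smooth positive radial $\lambda$ on a disk $\{|z|<r_0\}$ with $r_0$ small enough that this disk is a genuine injective chart. Set $V_0:=\vol_\sigma(\{|z|<r_0\})$. For $D>0$ and $\eps>0$ to be chosen later, I define a smooth positive function $h\colon[0,r_0]\to(0,\infty)$ with
\[
h(r)=\begin{cases} c/(2\eps) & \text{for } r\in[0,\eps],\\ c/r & \text{for } r\in[2\eps,r_0/2], \end{cases}
\]
$h(r_0)=\lambda(r_0)$, and $h$ smoothly interpolating on the transition intervals $[\eps,2\eps]$ and $[r_0/2,r_0]$, where $c>0$ is a constant to be fixed. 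Setting $g:=h(|z|)^2|dz|^2$ in the chart and $g:=\sigma$ outside yields a smooth Riemannian metric $g=e^{2u}\sigma$ with $u=\log(h/\lambda)$ in the chart and $u=0$ outside.

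The constant $c$ is then determined by imposing the area condition $\int_0^{r_0}2\pi h(r)^2 r\,dr=V_0$, which is equivalent to $\vol_g(M)=A$. The principal contribution to the left-hand side is the cylindrical term $\int_{2\eps}^{r_0/2}2\pi r\cdot(c/r)^2\,dr=2\pi c^2\log(r_0/(4\eps))$, while the remainder $R(\eps)$ from the inner disk and the outer transition stays bounded uniformly in $\eps$ (the inner contribution equals $\pi c^2=O(1/\log(1/\eps))$ and the outer transition can be arranged on a layer as thin as desired). Solving gives $c^2=(V_0-R(\eps))/(2\pi\log(r_0/(4\eps)))$, and the $g$-length of the radial segment $\{r\in[2\eps,r_0/2],\theta=0\}$ is
\[
L_{\mathrm{cyl}}:=\int_{2\eps}^{r_0/2}h(r)\,dr=c\log\!\left(\frac{r_0}{4\eps}\right)=\sqrt{\frac{(V_0-R(\eps))\log(r_0/(4\eps))}{2\pi}},
\]
which tends to $+\infty$ as $\eps\to 0$.

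To conclude, I would verify $\diam(M,g)\geq L_{\mathrm{cyl}}$ by checking $d_g(q_1,q_2)\geq L_{\mathrm{cyl}}$ for $q_1:=(2\eps,0)$ and $q_2:=(r_0/2,0)$ in the chart. Given a piecewise smooth path $\gamma\colon[0,1]\to M$ from $q_1$ to $q_2$, let $I_{\mathrm{in}}$ be the preimage of the chart under $\gamma$; the radial coordinate $r(t)$ is continuous on $I_{\mathrm{in}}$ and takes the values $2\eps$ at $t=0$ and $r_0/2$ at $t=1$. Either $\gamma$ remains inside the chart, in which case the intermediate value theorem forces $r(t)$ to cover all of $[2\eps,r_0/2]$, or $\gamma$ exits the chart, in which case $r(t)$ must reach $r_0$ along $I_{\mathrm{in}}$ and therefore covers $[2\eps,r_0/2]$ as well. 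A standard change of variables then yields
\[
\mathrm{length}_g(\gamma)\geq\int_{I_{\mathrm{in}}}h(r(t))\,|\dot r(t)|\,dt\geq\int_{2\eps}^{r_0/2}h(r)\,dr=L_{\mathrm{cyl}}.
\]
Choosing $\eps$ small enough so that $L_{\mathrm{cyl}}>D$ finishes the proof. The main technical points are the uniform bound on $R(\eps)$ and the smoothness of $h$ at the origin, both handled by a bump-function interpolation.
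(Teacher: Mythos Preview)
Your argument is correct and follows the same overall strategy as the paper: conformally deform $\sigma$ inside a small disk around a point $p$ so as to produce a long thin ``nose,'' while keeping the total area fixed. The execution differs in a way that is, if anything, cleaner than the paper's. You work in isothermal coordinates and take $h(r)=c/r$ on an annulus, which turns that annulus into a genuine flat cylinder of circumference $2\pi c$ and length $c\log(r_0/(4\eps))$; the area constraint then forces $c\sim(\log(1/\eps))^{-1/2}$, so the length grows like $(\log(1/\eps))^{1/2}\to\infty$. The paper instead works in hyperbolic polar coordinates with the explicit factor $\rho\sim r^{-(1-\delta/2)}$ and lets $\delta\to 0$, which achieves the same effect but does not produce an exact flat cylinder and requires more computation; it also normalizes the area by adjusting a multiplicative constant \emph{outside} the ball rather than matching areas inside the chart as you do. Your modulus-based framing makes the mechanism (large conformal modulus plus bounded area forces large length) transparent.

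One small point to tighten: for $g$ to be genuinely smooth across the boundary of the chart you need $h\equiv\lambda$ on a full neighborhood of $r=r_0$ (so that $u=\log(h/\lambda)$ extends by $0$), not merely $h(r_0)=\lambda(r_0)$. This is easily arranged by your bump-function interpolation and does not affect any of the estimates. Likewise, the determination of $c$ from the area equation is slightly implicit (since $R(\eps)$ depends on $c$ through the interpolation), but a direct intermediate-value argument in $c$ handles this without difficulty.
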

\begin{proof} The gist of the construction is quite easy: Consider a small neighborhood around some point and change the metric conformally in such a way that this neighborhood is stretched, but without changing the volume too much. As a result one gets a surface with a long and very thin nose.
The only difficulty is to actually produces a function that has exactly the right type of behavior.

As before, we fix a point $p$ on $M$ and consider standard hyperbolic polar coordinates $(r, \theta)$. Let us consider a family of smooth bump functions $\phi(r; a)\colon \mathbb R \rightarrow \mathbb R$, where $a>0$, such that $\phi(r; a)=1$ when $r\in[-a/2; a/2]$ and $\phi(r; a)=0$ when $r\geqslant a$. Fix a positive constant $\eps>0$ such that the $\eps$-neighborhood (for $\sigma$) of $p$ is inside the coordinate chart. Then, for any positive $\delta<-\frac{1}{\log\eps-\log 2}$ (i.e., $\delta$ is such that $\left(\frac{\eps}{2}\right)^{-\delta}<e$) we define a smooth function $\rho(x;\delta, C)$ on $M$ by, for every $x\in M$ inside the coordinate chart 
\begin{multline} \label{equation_definition_of_rho}
\rho(x;\delta, C) = \phi(r; \eps)\left(\frac{\eps}{2}\right)^{-\delta/2}\left(\frac{A\delta}{16\pi}\right)^{\frac{1}{2}}\left(\phi(r;e^{-\frac{1}{\delta}})\left(\frac{e^{-\frac{1}{\delta}}}{2}\right)^{-\left(1-\frac{\delta}{2}\right)}+(1-\phi(r;e^{-\frac{1}{\delta}}))r^{-\left(1-\frac{\delta}{2}\right)}\right) \\
+(1-\phi(r; \eps))C,
\end{multline}
and $\rho(x;\delta, C)=C$ for every $x\in M$ outside the coordinate chart.
As in the proof of Lemma \ref{Example_systole}, $C$ is a positive constant that will be chosen so that the total area of $M$ for the metric $g_{\delta} = \rho^2(x;\delta, C)\sigma$ is equal to $A$.

Let $B(p,\eps)$ denote the $\eps$-neighborhood of $p$ for the hyperbolic metric $\sigma$. Notice that for sufficiently small $\eps$ we have $\vol_{\sigma}\left(B(p,\eps)\right)\leq A/2$. Then, the $g$-volume of $M$, which is equal to $\int\limits_{M}\rho^2dv_{\sigma}$, satisfy

\begin{align*}
 \int_{M}\rho^2(x;\delta, C)dv_{\sigma} &\geqslant C^2\vol_{\sigma}\left(M \smallsetminus B(p,\eps) \right)\geqslant \frac{C^2A}{2}>A \quad \text{if } C^2> 2 \\
 \int_{M}\rho^2(x;\delta, C)dv_{\sigma} &\leqslant C^2\vol_{\sigma}\left(M \smallsetminus B(p,\eps)\right) + \vol_{g_{\delta}}\left(B(p,\delta)\right)\leqslant C^2 A+\frac{A}{2}<A \quad \text{if } C^2<\frac{1}{2}.
\end{align*}

Therefore, using the above inequalities and their continuous dependency on $C$, we deduce that there exists $C(\delta,\eps)$ such that $\vol_{g_{\delta}}(M) = A$. Moreover, $C(\delta,\eps)$ converges to $1$ when $\eps$ tends to zero.

The inequality $\vol_{g_{\delta}}\left(B(p,\delta)\right)\leqslant \frac{A}{2}$, for $\eps$ small enough, is obtained by direct computations. Indeed, we have
\begin{multline}\label{area_ball}
Area_{g_{\delta}}(B(p,\eps/2)) = \int\limits_0^{2\pi}\int\limits_0^{\eps/2}\rho^2(r;\delta)\sinh(r)\,dr\,d\theta\leqslant \int\limits_0^{2\pi}\int\limits_0^{\eps/2}\left(\frac{\eps}{2}\right)^{-\delta}\left(\frac{A\delta}{8\pi}\right)r^{-\left(1-\delta\right)}\,dr\,d\theta = \\ =
\left(\frac{\eps}{2}\right)^{-\delta}\left(\frac{A\delta}{4}\right)\frac{1}{\delta}\left(\frac{\eps}{2}\right)^{\delta}\leqslant \frac{A}{4}, 
\end{multline}
for sufficiently small $\eps$. 

And, if $C^2<1$ and $\eps, \delta$ are sufficiently small, the area of the annulus $\{(r, \theta)| r\in[\eps/2; \eps], \theta\in [0;2\pi)\}$ satisfies
\begin{multline*}
\int\limits_0^{2\pi}\int\limits_{\eps/2}^{\eps} \rho^2(r,\delta)\sinh(r)drd\theta\leqslant 4\pi\int\limits_{\eps/2}^{\eps} \left(\left(\frac{\eps}{2}\right)^{-\frac{\delta}{2}}\left(\frac{A\delta}{16\pi}\right)^{\frac{1}{2}}r^{-(1-\frac{\delta}{2})}+C\right)^2rdr \\
\leqslant 4\pi\left(\frac{A(2^{\delta}-1)}{16\pi}+C^2\frac{\eps^2}{2}+\frac{2C\eps}{2^{-\frac{\delta}{2}}(1+\frac{\delta}{2})}\left(\frac{A\delta}{16\pi}\right)^{\frac{1}{2}}\right)\leqslant \frac{A}{4}.
\end{multline*}

So all we have left to do is to prove that the diameter of $g_{\delta}$ is greater than $D$.
First, notice that the radial curves (for $\sigma$) in the annulus $B_{\sigma}(\frac{\eps}{2})\setminus B_{\sigma}(e^{\frac{1}{\delta}})$ are minimal geodesics. Indeed, since the conformal coefficient in $B_{\sigma}(\frac{\eps}{2})\setminus B_{\sigma}(e^{\frac{1}{\delta}})$ depends only on $r$ and not on $\theta$, the shortest curves between the boundaries are those such that the angle component stays constant, i.e., radial curves.

Now, let $l$ be such a radial curve in $B_{\sigma}(\frac{\eps}{2})\setminus B_{\sigma}(e^{\frac{1}{\delta}})$. Then its $g_{\delta}$-length satisfies
\begin{align}
length_{g_{\delta}}(l) = \int\limits_{e^{-\frac{1}{\delta}}}^{\eps/2}\rho(r;\delta)dr & \geqslant \int\limits_{e^{-\frac{1}{\delta}}}^{\eps/2}r^{-\left(1-\frac{\delta}{2}\right)}\left(\frac{\eps}{2}\right)^{-\delta/2}\left(\frac{A\delta}{16\pi}\right)^{\frac{1}{2}}dr \nonumber \\
&\geqslant \left(\frac{\eps}{2}\right)^{-\delta/2}\left(\frac{A\delta}{16\pi}\right)^{\frac{1}{2}}\frac{2}{\delta}\left(\left(\frac{\eps}{2}\right)^{\delta/2}-e^{-\frac{1}{2}}\right) \nonumber \\
&\geqslant \frac{1}{\delta^{\frac{1}{2}}}\left(\frac{A}{4\pi}\right)^{\frac{1}{2}}\left(1-\left(\frac{\eps}{2}\right)^{-\delta/2}e^{-\frac{1}{2}}\right) \rightarrow +\infty \qquad \text{ as } \quad \delta\rightarrow 0.  \label{annulus_new_size}
\end{align}



Therefore, for every positive constant $D$ there exists a positive number $\delta$ such that the diameter of $M$ with respect to a Riemannian metric $g_{\delta}$ (defined above) of total area $A$ and which is conformally equivalent to $\sigma$ is larger than $D$.
\end{proof}

\begin{rem} \label{rem_ColboisElSoufi}
 Notice that one can prove the above result more theoretically but less explicitely. The idea, used in \cite{CE03}, goes as follows: A Riemannian surface $(\Sigma,g)$ is, locally, almost Euclidean. Hence, it is, locally, almost conformal to the sphere with its standard metric. Therefore, starting with any metric $h$ conformal to the standard sphere, it is possible to construct a conformal deformation of $(\Sigma,g)$ around a point to make it arbitrarily close to $h$ in that neighborhood. 
 Applying that scheme when $h$ is a long and thin sphere gives an example as in Lemma \ref{Example_diameter}. One can also use this method to build conformal Cheeger Dumbell. Again, we will instead give an explicit example in the proof of Lemma \ref{Example_eigenvalue}.
\end{rem}

We recall that if we do not require the condition that $g$ has to be conformally equivalent to $\sigma$ in Lemma~\ref{Example_diameter} then there exists a Riemannian metric of constant negative curvature of fixed total area such that its diameter is larger than any a priori given positive number.

\subsection{Examples of metrics with arbitrary small first eigenvalue of Laplacian}

Our next example shows that if one drops the nonpositive curvature assumption, then Corollary \ref{cor_metric_entropy_vs_lambda_1} fails, i.e., one can get arbitrarily small $\lambda_1$ in a fixed conformal class.
\begin{lem}\label{Example_eigenvalue}
Let $M$ be a surface of genus $\geqslant 2$, equipped with a hyperbolic metric $\sigma$, of total area $A$. Then, for every $\eps>0$ there exists a Riemannian metric $g$ of total area $A$ that is conformally equivalent to $\sigma$ and $\lambda_1(g)<\eps$.
\end{lem}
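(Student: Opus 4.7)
The plan is to construct $g = e^{2u}\sigma$ of total area $A$ that concentrates the $g$-measure onto two small disjoint $\sigma$-discs, and then apply the Min-Max principle \eqref{min-max} with two test functions of disjoint support. The crucial observation is that in dimension two the Dirichlet integral is conformally invariant, so the numerator of the Rayleigh quotient
\[
R_g(f) = \frac{\int_M|\nabla_\sigma f|_\sigma^2\,dv_\sigma}{\int_M f^2\,e^{2u}\,dv_\sigma}
\]
does not see $u$. Hence, to make $\lambda_1(g)$ small it suffices to choose a conformal factor that inflates the denominator for some test function with a very small $\sigma$-Dirichlet energy.

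Fix two distinct points $p_1,p_2 \in M$ and $R > 0$ with $R \leqslant \min\{1,\inj(\sigma),\tfrac{1}{2}d_\sigma(p_1,p_2)\}$ so that the $\sigma$-balls $B_i^R := B_\sigma(p_i,R)$ are disjoint and normally embedded. For $r \in (0,R)$, set $B_i^r := B_\sigma(p_i,r)$ and pick $N(r)$ with $e^{2N(r)}\vol_\sigma(B_i^r) = A/8$, so $N(r) \to +\infty$ as $r \to 0$. Build a smooth $u_r$ equal to $N(r)$ on $B_1^r \cup B_2^r$, equal to a constant $c(r)$ outside a thin shell of $\sigma$-width $\eps(r) = r^2$ around each $\partial B_i^r$, and interpolating smoothly in the shells; the shell contribution to the $g$-area is at most $e^{2N(r)}\cdot O(r\eps(r)) = O(Ar)$, so a continuity argument in $c$ produces a uniformly bounded $c(r)$ with $\vol_g(M) = A$. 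For each $i$, define the logarithmic radial cutoff
\[
\phi_i(x) = \begin{cases} 1 & \text{if } d_\sigma(x,p_i) \leqslant r,\\ \log(R/d_\sigma(x,p_i))/\log(R/r) & \text{if } r \leqslant d_\sigma(x,p_i) \leqslant R,\\ 0 & \text{if } d_\sigma(x,p_i) \geqslant R,\end{cases}
\]
so that $\phi_1,\phi_2$ have disjoint supports. In hyperbolic polar coordinates around $p_i$, $|\nabla_\sigma \phi_i|_\sigma^2 = 1/(d^2\log^2(R/r))$ on the annulus, and $\sinh d \leqslant 2d$ on $[0,R]$ gives
\[
\int_M |\nabla_\sigma \phi_i|_\sigma^2\,dv_\sigma = \frac{2\pi}{\log^2(R/r)}\int_r^R \frac{\sinh d}{d^2}\,dd \leqslant \frac{4\pi}{\log(R/r)},
\]
while $\int_M \phi_i^2\,dv_g \geqslant e^{2N(r)}\vol_\sigma(B_i^r) = A/8$. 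Hence $R_g(\phi_i) \leqslant 32\pi/(A\log(R/r))$.

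Since $\phi_1,\phi_2$ have disjoint supports, $(\alpha_1,\alpha_2) := (\int\phi_2\,dv_g,-\int\phi_1\,dv_g) \neq (0,0)$ makes $f := \alpha_1\phi_1 + \alpha_2\phi_2$ a nonzero $g$-mean-zero test function. The cross-terms in $f^2$ and $|\nabla f|^2$ vanish by disjointness, so $R_g(f)$ is a convex combination of $R_g(\phi_1)$ and $R_g(\phi_2)$, and the Min-Max principle \eqref{min-max} yields
\[
\lambda_1(g) \leqslant R_g(f) \leqslant \max_i R_g(\phi_i) \leqslant \frac{32\pi}{A\log(R/r)},
\]
which is smaller than $\eps$ for all sufficiently small $r$. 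The main technical point is in the construction of $u_r$: the thin shell where it drops from $N(r) \sim \log(1/r)$ down to $c(r) = O(1)$ has a tiny $\sigma$-area but a huge integrand, and the tuning $\eps(r) = r^2$ is precisely what makes its $g$-area contribution $O(Ar) \to 0$, so that $c(r)$ stays uniformly bounded and the two-discs area budget is exactly $A$ in the limit.
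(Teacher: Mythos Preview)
Your proof is correct, and it takes a genuinely different route from the paper's. The paper builds ``long thin noses'' at two points $p,q$ using the explicit conformal factor $\rho(\cdot;\delta)$ from Lemma~\ref{Example_diameter}, and then uses test functions $f_i$ that decay \emph{linearly in the $g_\delta$-radial coordinate} along each nose; the smallness of $R_{g_\delta}(f_i)$ comes from the fact that the $g_\delta$-length $R_2-R_1$ of the nose blows up while its $g_\delta$-area stays bounded. Your construction instead \emph{concentrates} the $g$-area on two tiny $\sigma$-discs, keeps the test functions in the fixed $\sigma$-geometry, and exploits the $2$-dimensional conformal invariance of the Dirichlet integral together with the classical logarithmic cutoff. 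This makes the numerator estimate a clean capacity computation ($\int|\nabla_\sigma\phi_i|^2\,dv_\sigma\leqslant 4\pi/\log(R/r)$) entirely independent of $u$, while the denominator is pumped up by the mass concentration. Your argument is shorter and more elementary, and the use of conformal invariance is conceptually clarifying; the paper's construction, on the other hand, ties the example directly to the Cheeger-dumbbell picture (cf.\ Remark~\ref{rem_ColboisElSoufi}) and recycles the machinery already built for Lemma~\ref{Example_diameter}. One small remark: your $\phi_i$ are only Lipschitz, but this is enough since the Min-Max principle~\eqref{min-max} is stated over $H^1(M)$.
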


\begin{rem}
 As mentioned in Remark \ref{rem_ColboisElSoufi} above, such examples can also be obtained using the techniques of Colbois and El Soufi in \cite{CE03}.
\end{rem}

\begin{proof}
Let $p, q$ be two points in $M$ at $\sigma$-distance strictly greater than $\eps$. We define a metric $g_{\delta}$ that has total area $A$ and is conformally equivalent to $\sigma$ in the following way. 

Let $\rho(\cdot ;\delta)$ be the function that is constant outside of $\eps$-neighborhoods of $p$ and $q$ and defined by equation \eqref{equation_definition_of_rho} inside of the neighborhoods. We define $g_{\delta} := \rho^2(\cdot;\delta)\sigma$.



 An argument similar to the proof of Lemma~\ref{Example_diameter} shows that one can pick the value of the function $\rho$ outside of the $\eps$-neighborhoods in such a way that $g_{\delta}$ has total area $A$. 

Recall from Lemma~\ref{Example_diameter} that a $\sigma$-ball centered at $p$ or $q$ of radius smaller than or equal to $\eps/2$ is a $g_\delta$-ball a priori of some other radius. Let $(R, t)$ be polar coordinates for $g_{\delta}$ in the neighborhood of $p$. Denote by $R_1$ and $R_2$ the $g_{\delta}$-radii of $\sigma$-balls of radii $e^{-\frac{1}{\delta}}$ and $\eps/2$, respectively. Notice that equation \eqref{annulus_new_size} gives
\begin{align*}
R_2-R_1 = \frac{1}{\delta^{\frac{1}{2}}}\left(\frac{A}{4\pi}\right)^{\frac{1}{2}}\left(1-\left(\frac{\eps}{2}\right)^{-\delta/2}e^{-\frac{1}{2}}\right) \rightarrow +\infty \qquad \text{ as } \quad \delta\rightarrow 0
\end{align*}

Let $f_1$ be a function on $M$ such that it is equal to $1$ in the $R_1$-neighborhood of $p$ (for $g_\delta$) and $0$ outside of the $R_2$-neighborhood of it. In the annulus $\{(R,t)| R\in[R_1; R_2], t\in[0;2\pi)\}$ we define $f_1(R,t) = -\frac{R}{R_2-R_1}+\frac{R_2}{R_2-R_1}$. Let $f_2$ be the function obtained as $f_1$, but considering $q$ instead of $p$. Notice that $f_1, f_2\in H^1(M)$ as they are continuous and piecewise continuously differentiable. Furthermore, $f_1$ and $f_2$ have disjoint supports by construction.

The fact that the first eigenvalue of Laplacian for $g_{\delta}$ tends to $0$ as $\delta$ tends to $0$ will follow from the Min-Max principle (see equation \eqref{min-max}).
Let $V_2$ be the $2$-dimensional subspace of the Sobolev space $H^1(M)$ spanned by the functions $f_1$ and $f_2$. Any $F\in V_2$ can be written as $F = af_1+bf_2$, where $a,b\in\mathbb R$. Then,
\begin{align*}
\int\limits_M F^2 dv_{g_{\delta}} = a^2\int\limits_M f^2_1dv_{g_{\delta}}+2ab\int\limits_M f_1f_2dv_{g_{\delta}}+b^2\int\limits_M f^2_2dv_{g_{\delta}} = a^2\int\limits_M f^2_1dv_{g_{\delta}}+b^2\int\limits_M f^2_2dv_{g_{\delta}},
\end{align*}
as $f_1,f_2$ have disjoint supports. Similarly,
\begin{align*}
\int\limits_M |\nabla_{g_{\delta}} F|_{g_{\delta}}^2 dv_{g_{\delta}} = a^2\int\limits_M |\nabla_{g_{\delta}} f_1|_{g_{\delta}}^2dv_{g_{\delta}}+b^2\int\limits_M |\nabla_{g_{\delta}} f_2|_{g_{\delta}}^2dv_{g_{\delta}}.
\end{align*}
Therefore, 
\begin{align*}
R_{g_{\delta}}(F) = \frac{\int\limits_{M}|\nabla_{g_{\delta}} F|_{g_{\delta}}^2\,dv_{g_{\delta}}}{\int\limits_{M}f^2\,dv_{g_{\delta}}} = \frac{a^2\int\limits_M |\nabla_{g_{\delta}} f_1|_{g_{\delta}}^2dv_{g_{\delta}}+b^2\int\limits_M |\nabla_{g_{\delta}} f_2|_{g_{\delta}}^2dv_{g_{\delta}}}{a^2\int\limits_M f^2_1dv_{g_{\delta}}+b^2\int\limits_M f^2_2dv_{g_{\delta}}}\leqslant R_{g_{\delta}}(f_1)+R_{g_{\delta}}(f_2).
\end{align*}

The Min-Max principle and the estimate above on $R_{g_{\delta}}(F)$ imply that $\lambda_1(g_{\delta})\leqslant R_{g_{\delta}}(f_1)+R_{g_{\delta}}(f_2)$. As a result, if we show that $R_{g_{\delta}}(f_1)$ and $R_{g_{\delta}}(f_2)$ tend to $0$ as $\delta$ tends to $0$, then $\lambda_1(g_{\delta})$ tends to $0$ as $\delta$ tends to $0$ and Lemma~\ref{Example_eigenvalue} is proven.

We only estimate $R_{g_{\delta}}(f_1)$, since the computations for $f_2$ are the same.

Let $S(R_1;R_2) = \{(R,t)|r\in[R_1;R_2], t\in[0;2\pi)\}$. By definition of $f_1$ and equation \eqref{area_ball}, for sufficiently small $\eps$ we have
\begin{align*}
\int\limits_M |\nabla_{g_{\delta}} f_1|_{g_{\delta}}^2dv_{g_{\delta}} = \frac{1}{(R_2-R_1)^2}Area_{g_{\delta}}(S(R_1;R_2)) \leqslant \frac{A}{4(R_2-R_1)^2}.
\end{align*}

Notice, since $R_2-R_1$ tends to $+\infty$ as $\delta$ tends to $0$, that $\int\limits_M |\nabla_{g_{\delta}} f_1|_{g_{\delta}}^2dv_{g_{\delta}}$ tends to $0$ as $\delta$ tends to $0$.

So all we have left to do is show that $\int_M f_1^2 dv_{g_{\delta}}$ is bounded below by a strictly positive constant.

For $R_2>R>R_1$, the coordinates $R$ and $r$, which are respectively the $g_{\delta}$- and $\sigma$-distances from $p$, are related by the following equation.

\begin{align*}
R = R_1 + \int\limits_{e^{-\frac{1}{\delta}}}^r \left(\frac{\eps}{2}\right)^{-\delta/2}\left(\frac{A\delta}{16\pi}\right)^{\frac{1}{2}}r^{-\left(1-\frac{\delta}{2}\right)}dr = R_1 + \left(\frac{\eps}{2}\right)^{-\delta/2}\left(\frac{A}{4\pi\delta}\right)^{\frac{1}{2}}\left(r^{\frac{\delta}{2}}-e^{-\frac{1}{2}}\right).
\end{align*}

So direct computation shows that, for $R_2>R>R_1$,
\begin{equation*}
f_1(R,t) = \frac{R_2-R}{R_2-R_1} = \frac{\left(\frac{\eps}{2}\right)^{\frac{\delta}{2}} - r^{\frac{\delta}{2}}}{\left(\frac{\eps}{2}\right)^{\frac{\delta}{2}}-e^{-\frac{1}{2}}} 
\end{equation*}
and there exists a positive constant $F=F(A)$ such that $\int\limits_M f_1^2\,dv_{g_\delta}>F(A)$ for sufficiently small $\delta$.
Therefore, $R_g(f_1)$ tends to $0$ as $\delta$ tends to $0$, and the same is true for $f_2$. 
\end{proof}

It is easy to construct hyperbolic metrics with arbitrarily small $\lambda_1$, just by leaving compacts in the Teichm\"uller space. However, these examples have also unbounded diameter. Here, we show that one can build negatively curved surface with bounded diameter and arbitrarily small $\lambda_1$. Notice that by Corollary \ref{cor_metric_entropy_vs_lambda_1}, the hyperbolic metrics in the conformal class of these examples must also leave every compacts of the Teichm\"uller space.
\begin{lem}\label{lem_lambda_not_conform}
Suppose $M$ is a surface of genus $2$ and $A>0$. There exists $D>0$ such that, for every $\eps>0$, there exists a negatively curved Riemannian metric $g$ of total area $A$, the diameter less than $D$, and $\lambda_1(g)<\eps$.
\end{lem}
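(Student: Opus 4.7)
The plan is a Cheeger dumbbell construction: build $g$ as two ``lobes'' of area bounded below, joined by a thin neck whose very negative curvature keeps its length bounded as the cross-section shrinks. Such examples are forced to leave every fixed conformal class, in accordance with Corollary~\ref{cor_metric_entropy_vs_lambda_1}.

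First, I construct the neck as a warped cylinder $N_\eps = [-L, L] \times \mathbb{R}/2\pi\mathbb{Z}$ with metric $du^2 + \phi_\eps(u)^2 d\theta^2$, where $\phi_\eps$ is even, smooth, strictly convex, with $\phi_\eps(0) = \eps/(2\pi)$ (thin middle) and prescribed values of $\phi_\eps$, $\phi_\eps'$, $\phi_\eps''$ at $u = \pm L$ (to allow $C^\infty$ gluing with the lobes). Since the Gaussian curvature equals $-\phi_\eps''/\phi_\eps$, strict convexity gives strictly negative curvature. One can take $L$ bounded independently of $\eps$ (for example, with $\phi_\eps$ of the polynomial form $\eps/(2\pi) + a(u/L)^2 + b(u/L)^4$, for fixed $L$), at the cost of making the curvature $-\phi_\eps''/\phi_\eps$ very large near $u=0$. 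Next, I build each lobe as a smooth negatively curved genus-$1$ surface with one boundary circle, whose metric near the boundary has the warped form $dr^2 + \psi(r)^2 d\theta^2$ with $\psi$ strictly convex and matching $\phi_\eps(L), \phi_\eps'(L), \phi_\eps''(L)$ at the gluing circle. Such a lobe is produced by starting from a hyperbolic genus-$1$ surface with short geodesic boundary and attaching a short convex ``flare'' realizing the required boundary data.

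Gluing the two lobes to the two ends of the neck produces a smooth closed genus-$2$ surface $(M, \tilde g_\eps)$ with strictly negative curvature and total area bounded above and below by constants independent of $\eps$. Rescaling by a constant $\lambda_\eps$ lying in a fixed compact subinterval of $(0,\infty)$, I obtain $g = \lambda_\eps^2 \tilde g_\eps$ with $\vol_g(M) = A$, still smooth and strictly negatively curved. The $g$-diameter is bounded by $\lambda_\eps (D_{\text{lobe}} + 2L) \leq D$, uniformly in $\eps$, where $D_{\text{lobe}}$ is the (fixed) diameter of a single lobe. For the first eigenvalue, the middle circle $\{u = 0\}$ separates $M$ into two regions, each of $g$-area bounded below by a fixed positive constant, while the $g$-length of the separating circle is $\lambda_\eps \cdot \eps \to 0$. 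Cheeger's isoperimetric inequality $\lambda_1(g) \leq C h(g)^2$ (with $h$ the Cheeger constant) then yields $\lambda_1(g) \to 0$, so I can choose the neck parameter small enough that $\lambda_1(g) < \eps$.

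The main obstacle is the simultaneous $C^\infty$ matching of the warping functions and their first two derivatives between the neck and the lobes while maintaining strict convexity throughout. This is a purely local and combinatorial problem: one needs a pair of convex functions $\phi_\eps$ on $[-L, L]$ and $\psi$ on $[0, W]$ with matching $C^2$-jets at the gluing circle, plus strict convexity everywhere. Taking both of flexible parametric forms (polynomials or $\cosh$) and solving the resulting system of three equations for the free parameters is straightforward; a final standard mollification provides the $C^\infty$ smoothness without spoiling the strict negativity of the curvature.
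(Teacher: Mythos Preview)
Your approach is essentially the paper's: a Cheeger dumbbell built from a warped cylinder $du^2+\phi(u)^2\,d\theta^2$ with strictly convex $\phi$ (hence curvature $-\phi''/\phi<0$) joining two pieces of controlled geometry, then shrinking the waist. The paper streamlines the construction by starting from a single hyperbolic metric on $M$ that is already rotationally symmetric in a collar about a short closed geodesic and modifying only the warping function there so that it matches $a\cosh r$ outside a fixed interval and is uniformly small on a subinterval of \emph{fixed} length; this sidesteps the gluing and $C^\infty$-matching bookkeeping you flag at the end.

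There is, however, a genuine error in your last step. Cheeger's inequality reads $\lambda_1(g)\geqslant h(g)^2/4$, the opposite direction from what you wrote, and Buser's reverse inequality requires a lower Ricci bound, which your metrics lack (you note yourself that the curvature near $u=0$ diverges). The correct route is a direct Rayleigh-quotient estimate, which is exactly what the paper invokes via the classical dumbbell computation of \cite{BGM}. With your polynomial profile $\phi_\eps(u)=\eps/(2\pi)+a(u/L)^2+b(u/L)^4$ the naive test function linear in $u$ across the whole neck does not give a vanishing Dirichlet integral, since the neck area does not tend to $0$. Instead take $f$ equal to $\pm1$ on the two lobes and, on the neck, $f(u)=-1+2I^{-1}\int_{-L}^u\phi_\eps^{-1}\,ds$ with $I=\int_{-L}^L\phi_\eps^{-1}\,ds$; then $\int_M|\nabla f|^2\,dv_g=8\pi/I$, and a short computation gives $I\asymp \eps^{-1/2}$, so the Rayleigh quotient tends to $0$. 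Alternatively, imitate the paper and keep $\phi_\eps$ small on an interval of fixed length; then even the linear test function already does the job.
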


\begin{proof}
Let $\sigma$ be a metric on $M$ of constant negative curvature and total area $A$. Let $\gamma$ be a closed $\sigma$-geodesic such that $l_{\sigma}(\gamma) =\sys(\sigma)$. We assume that we chose $\sigma$ such that it is symmetric with respect to $\gamma$ and has a rotationally invariant cylindrical neighborhood $C$ of sufficiently large size around $\gamma$. In \cite{EK}, it was shown that it is possible to modify $\sigma$ on this cylindrical neighborhood and obtain a family of negatively curved metrics of total area $A$ such that the infimum of the systole in this family of metrics is $0$ and the supremum of the diameter in it is bounded above by a constant $D$, which depends only on $A$ and on the initial choice of $\sigma$. We recall that construction and point out what modifications are needed in order to get Lemma~\ref{lem_lambda_not_conform}.

 Without loss of generality, we may assume that $\sigma$ has curvature $-1$ (so $A= 2\pi|\chi(M)|$). Let $a:=\sys(\sigma)/2\pi$. The metric tensor in the normal coordinates $(r, \theta)$ for $\sigma$ with respect to the $\sigma$-systole has the form $\left(\begin{matrix} 1 & 0\\ 0 & a^2\cosh^2(r)\end{matrix}\right)$, where $\theta$ is the coordinate along the $\sigma$-systole, which runs from $0$ to $2\pi$, and $r$ is the distance to the $\sigma$-systole.

 The metric $g$ that we build matches with $\sigma$ outside of the cylindrical neighborhood $C$ of $\gamma$ and is also symmetric and rotationally invariant in it. In general, if $g$ is a smooth metric that admits a rotationally invariant cylindrical neighborhood of a closed geodesic, then in the normal coordinates $(r,\theta)$ with respect to that geodesic the metric tensor for $g$ has the form $\left(\begin{matrix} 1 & 0\\ 0 & f^2(r)\end{matrix}\right)$. Moreover, $g$ is negatively curved if and only if $f''(r)>0$, that is, $f(r)$ is convex.
 
 It is easy to construct (see Figure \ref{fig_small_lambda1}) a function $f(r)$ that is convex, even, matches smoothly $a\cosh(r)$ outside of a fixed neighborhood of $0$, and such that $f(r)$ is arbitrarily small on another neighborhood of $0$ of fixed length. The metric $g$ obtained from $f$ is then negatively curved, coincides with $\sigma$ outside of $C$, and contains an almost flat cylinder of fixed area and arbitrarily small radius. We can further normalize $g$ so that the total volume is $A$, without affecting any of the essential features of $g$: $\diam(M,g)$ is uniformly bounded by some constant $D$, negatively curved and contains an almost flat cylinder of fixed area and arbitrarily small radius (see \cite[Section 2.1]{EK} for more details).
 
 Hence the classical proof that the Cheeger dumbbell has arbitrarily small $\lambda_1$ applies to $g$ and yields the lemma (see, for instance \cite{BGM}).\qedhere

%

\begin{figure}[H]
      \centering
      \includegraphics[height=4.5cm]{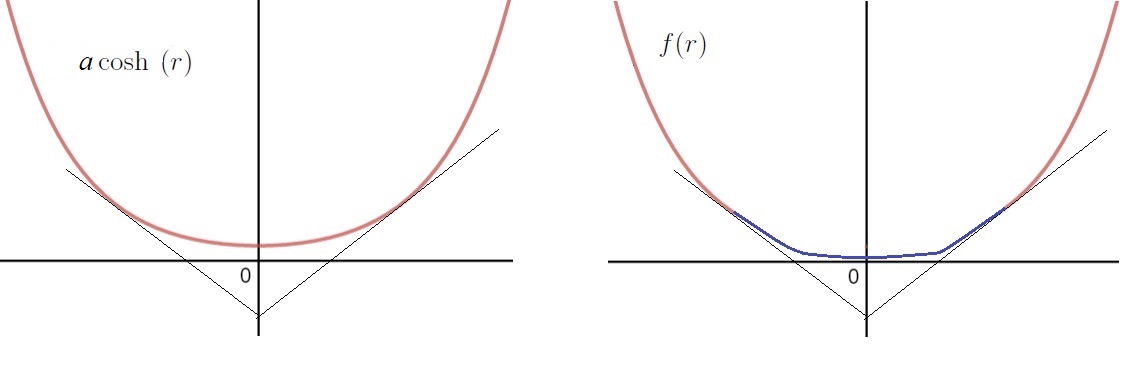}
      \caption{The example of $f(r)$.}
      \label{fig_small_lambda1}
      \end{figure}
\end{proof}

\section{Topological entropy vs systole}

As we mentioned in the introduction, putting together Theorem \ref{bound_entr_systole} below and a result of Besson, Courtois and Gallot (\cite[Corollaire 0.6]{BCG_margulis}) implies that, for a family of negatively curved metrics of fixed total area with diameter uniformly bounded above, then the topological entropy goes to infinity if and only if the systole goes to zero.

We were not able to find a reference for Theorem \ref{bound_entr_systole} in the literature, therefore, we provide a proof here, based on suggestions by Keith Burns.
\begin{thm}\label{bound_entr_systole}
Suppose $M$ is a smooth closed Riemannian manifold of dimension $n$ with negative sectional curvature and total volume $V$. There exists an upper bound on the topological entropy of the geodesic flow that depends only on the injectivity radius of the metric, $n$ and $V$. 
\end{thm}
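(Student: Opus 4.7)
My strategy is to bound $h_{top}(g)$ via orbit counting on the fundamental group, using a symbolic coding. By Manning's theorem, since $g$ has negative curvature, $h_{top}(g) = \lim_{R\to\infty} \frac{1}{R}\log\vol_{\wt g}B_{\wt g}(\wt x, R)$ for any basepoint $\wt x\in\wt M$. Partitioning the ball by fundamental-domain translates gives $\vol_{\wt g}B_{\wt g}(\wt x,R) \leq V\cdot N(R+D_0)$, where $N(R):=\#\{\gamma\in\pi_1(M):d_{\wt g}(\wt x,\gamma\wt x)\leq R\}$ and $D_0:=\diam(M,g)<\infty$. Hence it suffices to bound the exponential growth rate of $N(R)$ in terms of $\inj(g)$, $n$, and $V$.

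Set $i:=\inj(g)$ and choose a maximal $(i/4)$-separated set $\{p_1,\ldots,p_K\}\subset M$. Let $\omega_n$ denote the volume of the unit Euclidean $n$-ball. Since the sectional curvature of $g$ is nonpositive and $i/8<\inj(g)$, the Bishop--G\"unther inequality gives $\vol(B(p_k,i/8))\geq\omega_n(i/8)^n$; as these balls are pairwise disjoint, $K\leq V/(\omega_n(i/8)^n)$, a quantity depending only on $i$, $V$, $n$. By maximality, the balls $B(p_k,i/4)$ cover $M$. To each $\gamma$ with $d_{\wt g}(\wt x,\gamma\wt x)\leq R$, subdivide the unique $\wt g$-geodesic from $\wt x$ to $\gamma\wt x$ into pieces of length at most $i/4$, getting points $\wt x_0=\wt x, \wt x_1, \ldots, \wt x_m=\gamma\wt x$ with $m\leq\lceil 4R/i\rceil$, and associate the symbolic code $\phi(\gamma):=(k_0, k_1, \ldots, k_m)$ where $\pi(\wt x_j)\in B(p_{k_j}, i/4)$.

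The crux of the argument is showing that $\phi$ is injective, which I establish by induction on $j$: the connected lift $\wt B_j\subset\wt M$ of $B(p_{k_j}, i/4)$ containing $\wt x_j$ is determined by $\wt B_{j-1}$ and $k_j$ alone. Suppose otherwise: some $\eta\ne e$ would have $\eta\wt B_j$ also containing a point within $\wt g$-distance $i/4$ of $\wt B_{j-1}$. Combining this with $\diam(\wt B_{j-1}),\diam(\wt B_j)\leq i/2$, the triangle inequality produces $\wt z\in\wt B_j$ satisfying $d_{\wt g}(\wt z, \eta\wt z)\leq 3i/2$. But for any $\eta\ne e$, the shortest loop at $\pi(\wt z)$ in the class $\eta$ has length at least $\sys(g)=2\inj(g)=2i$, a contradiction. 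Consequently, $\wt x_m$ lies in a uniquely determined lift of diameter $\leq i/2$, so two distinct $\gamma$'s yielding the same code would produce orbit points at distance at most $i/2 < 2i$, which is impossible.

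Combining these bounds, $N(R)\leq K^{\lceil 4R/i\rceil+1}$, yielding
\[
h_{top}(g)\leq\frac{4\log K}{i}\leq\frac{4}{i}\log\frac{V}{\omega_n(i/8)^n},
\]
a bound depending only on $\inj(g)$, $n$, and $V$. The principal technical obstacle is the inductive lift-tracking in the injectivity argument: both the cover radius and the subdivision scale must be chosen small enough (namely $\leq i/4$) so that the resulting estimate $3i/2 < 2\inj(g)$ forces $\eta=e$; with a larger cover radius such as $i/2$, the corresponding bound would be $3i > 2\inj(g)$ and the argument would break down.
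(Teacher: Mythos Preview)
Your proof is correct and follows essentially the same overall strategy as the paper: bound the number of ``symbols'' in a covering net via volume comparison with Euclidean balls, then use a symbolic coding along geodesics (with step size a fixed fraction of $\inj(g)$) to bound the relevant exponential growth rate.

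The implementations differ in two linked respects. The paper counts \emph{closed geodesics} and invokes the Bowen--Margulis characterization of $h_{top}$, establishing injectivity of the coding by noting that two closed geodesics with the same code are at Hausdorff distance at most $\rho=\inj(g)$ and hence coincide (a standard fact in negative curvature). You instead count \emph{orbit points} $\gamma\wt x$ and invoke Manning's volume-growth formula, proving injectivity by an explicit lift-tracking induction together with the displacement bound $d_{\wt g}(\wt z,\eta\wt z)\geq 2\inj(g)$ for $\eta\neq e$. Your route is a bit more self-contained (no appeal to the closed-geodesic uniqueness/closeness lemma), at the price of a more delicate choice of constants to make the $3i/2<2i$ inequality go through; the paper's version is slightly slicker but relies on the cited dynamical input. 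Both yield explicit bounds of the same form, $h_{top}(g)\leq C(\inj(g))^{-1}\log\bigl(V\cdot\inj(g)^{-n}\bigr)$.
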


\begin{proof}
Let $\rho$ be the injectivity radius of $M$ and set $\eps = \rho/4$. Then, the length of the shortest closed geodesic is $2\rho = 8\eps$. The volume of a ball of radius $\eps$ in $M$ is bounded from below by the volume of a ball of radius $\eps$ in ${\mathbb R}^n$, i.e, $\nu(\eps,n) = (\eps\sqrt{\pi})^n/\Gamma(\frac{n}{2} + 1)$. Let $N = \lfloor{V/\nu(\eps,n)}\rfloor$.

There can be at most $N$ pairwise disjoint $\eps$ balls in $M$. Since the centers of a maximal collection of disjoint $\eps$ balls in $M$ are a $2\eps$-spanning set, it is possible to choose $N$ points, $p_1,\dots,p_N$ such that the balls $B(p_i,2\eps)$ cover $M$. For each $p \in M$ choose $j(p) \in \{1,\dots,N\}$ such that $p \in B(p_{j(p)},2\eps)$. 

A geodesic  $\gamma\colon [0,T] \to M$ can be coded by the sequence 
$$
j(\gamma(0)), j(\gamma(\eps)), j(\gamma(2\eps)),\dots, j(\gamma(n_\eps(T)\eps), j(\gamma(T)),
$$
where $n_\eps(T)$ is the largest integer such that $n_\eps(T)\eps < T$. Suppose that $\gamma_1\colon [0,T_1] \to M$ and 
 $\gamma_2\colon [0,T_2] \to M$ are two closed geodesics with the same coding sequences. For each $n$ the geodesic segments $\gamma_1|_{[n\eps,(n+1)\eps]}$ and $\gamma_2|_{[n\eps,(n+1)\eps]}$ lie in a common ball of radius $3\eps < \rho$. The same is true of the segments $\gamma_1|_{[n_\eps(T_1)\eps, T_1]}$ and $\gamma_2|_{[n_\eps(T_2)\eps, T_2]}$. It follows that the Hausdorff distance between the two geodesics is at most $\rho$. Consequently, the two closed geodesics must be the same (up to reparametrization).
 
Hence, the number of closed geodesics of length $\leqslant T$ will be at most the number of possible codings, which is bounded above by $N^{(T/\eps + 1)}$. Now,
\[
 \lim_{T\to +\infty}\frac{1}{T} \log \left(N^{(T/\eps + 1)}\right) = \frac{1}{\eps}\log N.
\]
Since $g$ is negatively curved, the topological entropy is given by the exponential growth rate of the number of closed geodesics of length $\leqslant T$ \cite{B72}, \cite{M69}. Hence $(\log N)/\eps$ is an upper bound on the topological entropy and depends only on $V$, $n$ and $\eps =\rho/4$.
\end{proof}

\end{document}